\newcommand{\m}{\mathfrak{m} }
\newcommand{\f}{\mathcal{F}}
\newcommand{\R}{\mathcal{R}}
\newcommand{\Z}{\mathbb{Z} }
\newcommand{\N}{\mathbb{N} }
\newcommand{\QQ}{\mathbb{Q} }
\newcommand{\C}{\mathbb{C} }
\providecommand{\Q}{{\mathbb Q}}
\newcommand{\Ass}{\operatorname{Ass}}
\newcommand{\chars}{\operatorname{char}}
\newcommand{\grade}{\operatorname{grade}}
\newcommand{\gr}{\operatorname{gr}}
\newcommand{\sing}{\operatorname{Sing}}
\newcommand{\hgt}{\operatorname{ht}}
\newcommand{\vol}{\operatorname{vol}}
\newcommand{\sym}{\operatorname{Sym}}
\theoremstyle{plain}
\newtheorem{theorem}{Theorem}[section]
\newtheorem{proposition}[theorem]{Proposition}
\newtheorem*{algorithm*}{Algorithm}
\theoremstyle{definition}
\newtheorem{remark}[theorem]{Remark}
\newtheorem{remarks}[theorem]{Remarks}
\newtheorem{example}[theorem]{Example}
\newtheorem*{example*}{\it Example}
\theoremstyle{remark}
\newtheorem*{claim*}{\it Claim}
\newtheorem*{case*}{\it Case}
\newtheorem*{note*}{\it Note}
\title[Computing mixed multiplicities]{COMPUTING MIXED MULTIPLICITIES, MIXED VOLUMES AND SECTIONAL MILNOR NUMBERS}
\thanks{{\it 2020 AMS Mathematics Subject Classification:} Primary 13-04, 13A30, 13H15.}
\thanks{{\it Key words}: Multi-Rees algebras, mixed multiplicities, sectional Milnor numbers, mixed volume}
\author{Kriti Goel}
\address{Department of Mathematics, University of Utah, Salt Lake City, UT 84112, USA}
\email{kritigoel.maths@gmail.com}
\author{Vivek Mukundan}
\address{Department of Mathematics, Indian Institute of Technology Delhi, New Delhi, 110016, India}
\email{vmukunda@iitd.ac.in}
\author{Sudeshna Roy}
\address{Department of Mathematics, Chennai Mathematical Institute, Siruseri, Kelambakkam 603103, India}
\email{sudeshnaroy.11@gmail.com}
\author{J. K. Verma}
\address{Department of Mathematics, Indian Institute of Technology Bombay, Mumbai, 400076, India}
\email{jkv@math.iitb.ac.in}
\begin{document}

\begin{abstract}
This is an expository version of our paper \cite{GoeMukRoyVer}. Our aim is to present recent Macaulay2 algorithms for computation of mixed multiplicities of ideals in a Noetherian ring which is either local or a standard graded algebra over a ﬁeld. These algorithms are based on computation of the equations of multi-Rees algebras of ideals that generalises a result of  Cox, Lin and Sosa \cite{CLS}. Using these equations we propose efficient algorithms for computation of mixed volumes of convex lattice  polytopes and sectional Milnor numbers of hypersurfaces with an isolated singularity. 
\end{abstract}

\maketitle

\section{Introduction}
In this expository article we describe the Macaulay2 package \texttt{MixedMultiplicity} which computes the mixed multiplicities of ideals having positive grade in a Noetherian ring, mixed volume of a collection of convex lattice polytopes and sectional Milnor numbers of hypersurfaces with an isolated singularity. One of the main steps of the algorithms is the computation of the defining equations of the multi-Rees algebra 
\[\R(I_1,\ldots,I_s)=R[I_1t_1, \ldots, I_st_s]=\bigoplus\limits_{a_1, \ldots, a_s \geq 0} I_1^{a_1} \cdots I_s ^{a_s}t_1^{a_1} \cdots t_s^{a_s} \subseteq R[t_1, \ldots, t_s]\] 
of ideals $I_1,\ldots,I_s$ in a Noetherian ring $R$. When $R$ is a polynomial ring of finite Krull dimension over any field $k$ and $I_1,\ldots, I_s$ are monomial ideals in $R$, D. Cox, K.-N. Lin, and G. Sosa give an explicit formula for the defining equations of $\R(I_1,\ldots,I_s)$ in \cite{CLS}. In this article, we obtain an analogue of their result for any set of ideals $I_1, \ldots, I_s$ in a Noetherian ring $R$ such that each $I_i$ has positive grade. The latter condition is always satisfied when $R$ is a domain or for any ideal of positive height in a reduced ring or in a Cohen-Macaulay ring. Several authors have proposed algorithms to determine the defining equations of the multi-Rees algebra. For example,  J. Ribbe \cite[Proposition 3.1, Proposition 3.4]{Ribbe}, K.-N. Lin and C. Polini \cite[Theorem 2.4]{Lin-Polini}, G. Sosa \cite[Lemma 2.1]{Sosa},  and  B. Jabarnejad \cite[Theorem 1]{Jabarnejad}. They  assumed $I_i=\m^{a_i}$ or $I_i=I^{a_i}$, where $I$ is an ideal of linear type, or $I_i$'s are monomial ideals. We show the following.

\begin{theorem}
	Let $R$ be a Noetherian ring and $I_1,\ldots, I_s \subseteq R$ be ideals of positive grade. For each $i$, consider some generating set $\{f_{ij} \mid j=1, \ldots, n_i\}$ of $I_i$ which contains at least one nonzerodivisor $f_{ij_i}$. We set $h=\prod_{i=1}^s f_{ij_i}$. Consider the indeterminates $\underline{Y} = \{Y_{ij} \mid i=1,\ldots,s, j=1,\ldots,n_i \}$ and $\underline{T}=(T_1,\ldots,T_s).$ Define an $R$-algebra homomorphism
	$\varphi: S=R[\underline{Y}] \to \R(I_1,\ldots,I_s)\subseteq R[\underline{T}]$ by
	$Y_{ij} \mapsto f_{ij}T_i$ for all $i=1,\ldots,s, \text{ and } j=1,\ldots,n_i.$ Then
	\[ \Gamma := \langle Y_{ij} f_{ij'} - Y_{ij'} f_{ij} \mid i=1,\ldots,s \text{ and } j,j' \in \{1,\ldots,n_i\} \mbox{ with } j \neq j' \rangle : h^\infty \subseteq R[\underline{Y}]. \]	
	is the defining ideal of $\mathcal{R}(I_1,\ldots,I_s)$, that is, $\Gamma=\ker \varphi$. 
\end{theorem}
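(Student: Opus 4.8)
The plan is to invert the nonzerodivisor $h$ --- which makes every $I_i$ the unit ideal and trivializes the problem --- and then descend. Throughout, write
$L = \langle\, Y_{ij}f_{ij'}-Y_{ij'}f_{ij} \mid i=1,\dots,s,\ j\ne j'\,\rangle \subseteq S$, so that $\Gamma = L : h^\infty$, and write $\mathcal{R}=\mathcal{R}(I_1,\dots,I_s)$.

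First I would dispose of the inclusion $\Gamma\subseteq\ker\varphi$. Since $\varphi(Y_{ij}f_{ij'}-Y_{ij'}f_{ij})=f_{ij}f_{ij'}T_i-f_{ij'}f_{ij}T_i=0$, we have $L\subseteq\ker\varphi$. Each $f_{ij_i}$ is a nonzerodivisor of $R$, hence so is $h$, hence $h$ is a nonzerodivisor on $R[\underline T]$ and so on its subring $\mathcal{R}$, i.e.\ on the image of $\varphi$. Thus if $g\in\Gamma$ then $h^Ng\in L\subseteq\ker\varphi$ for some $N$, so $h^N\varphi(g)=0$, and therefore $\varphi(g)=0$.

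Next I would localize at the powers of $h$. Let $R_h$ denote this localization; since $h$ is a nonzerodivisor, $R\hookrightarrow R_h$, and as $f_{ij_i}$ divides $h$, it becomes a unit $u_i^{-1}\in R_h$, so $I_iR_h=R_h$ for all $i$. Because $\mathcal{R}$ is the $R$-subalgebra of $R[\underline T]$ generated by the elements $f_{ij}T_i$, and $T_i=u_i(f_{ij_i}T_i)$ after inverting $h$, we get $\mathcal{R}_h=R_h[T_1,\dots,T_s]$; and $\varphi$ localizes to a surjection $\varphi_h\colon S_h=R_h[\underline Y]\twoheadrightarrow R_h[\underline T]$, $Y_{ij}\mapsto f_{ij}T_i$, with $\ker\varphi_h=(\ker\varphi)_h$ (flat base change) and $LS_h=L_h$. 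I then claim $\ker\varphi_h=LS_h$. Indeed, for $j\ne j_i$ the generator $f_{ij_i}Y_{ij}-f_{ij}Y_{ij_i}$ of $L$, multiplied by $u_i$, shows $Y_{ij}\equiv u_if_{ij}Y_{ij_i}\pmod{LS_h}$; hence one may define an $R_h$-algebra map $\psi\colon R_h[\underline T]\to S_h/LS_h$ by $T_i\mapsto u_i\overline{Y_{ij_i}}$ (legitimate, the source being a polynomial ring), and a direct check on generators shows that $\psi$ and the map $\overline{\varphi_h}\colon S_h/LS_h\to R_h[\underline T]$ induced by $\varphi_h$ are mutually inverse. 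So $\overline{\varphi_h}$ is injective, which is exactly $\ker\varphi_h=LS_h$.

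To finish, I would descend: combining the above, $(\ker\varphi)_h=\ker\varphi_h=LS_h=L_h$, so $(\ker\varphi/L)_h=0$; since we inverted the single element $h$, this means every $g\in\ker\varphi$ satisfies $h^Ng\in L$ for some $N$, i.e.\ $\ker\varphi\subseteq L:h^\infty=\Gamma$. Together with the first step, $\ker\varphi=\Gamma$. The one point that genuinely needs care is the identification $\ker\varphi_h=LS_h$: one must be certain that after inverting $h$ the obvious linear relations already define the algebra --- equivalently, that the relations $Y_{ij}f_{ij'}-Y_{ij'}f_{ij}$ involving no $Y_{ij_i}$ are redundant modulo $\langle Y_{ij}-u_if_{ij}Y_{ij_i}\rangle$ --- and exhibiting the explicit inverse $\psi$ is the cleanest way to see this, bypassing any Gr\"obner-basis computation.
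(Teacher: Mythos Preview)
Your proof is correct and follows essentially the same strategy as the paper: invert $h$, identify the kernel over $R_h$ with the ideal generated by the linear relations (the paper writes this as $J=\sum J_i$ with $J_i=(Y_{ij}-\tfrac{f_{ij}}{f_{i1}}Y_{i1})$, you write it as $LS_h$; these coincide), and then descend using that $h$ is a nonzerodivisor on $\mathcal{R}$. The only cosmetic differences are that the paper verifies $\widetilde{\phi}^{-1}(J)=\Gamma$ by a separate two-inclusion computation and invokes the ``contracted ideal'' language, whereas you package the same content as $(\ker\varphi/L)_h=0$ together with the explicit inverse $\psi$.
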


Using this result, we write a function \texttt{multiReesIdeal} to compute the defining ideal of a multi-Rees algebra in Macaulay2. 
When the ring is not a domain, the algorithm follows an analogue of the \texttt{reesIdeal} function, albeit for the multiple ideal setting.

The algorithm for computing defining equations of $\R(I_1, \ldots, I_s)$ helps us construct algorithms to compute \emph{mixed multiplicities}, \emph{mixed volume} and \emph{sectional Milnor numbers} in the general setting. To justify importance of such an algorithm, we now recall these invariants and their importance.

\subsection{\bf Mixed multiplicities of ideals}
Let $I_1,\ldots, I_r$ be a sequence of ideals in a local ring $(R,\m)$ {\rm(}or a standard graded algebra over a field and $\m$ is the maximal graded ideal{\rm)} and let $I_0$ be an $\m$-primary ideal. In \cite{Verma-katz}, the authors prove that $\ell\left(I_0^{u_0}I_1^{u_1}\cdots I_r^{u_r}/I_0^{u_0+1}I_1^{u_1}\cdots I_r^{u_r}\right)$ is a polynomial $P(\underline{u})$, for $u_i$ large, where $\underline{u}=(u_0, \ldots, u_r)$. Write this polynomial in the form 
\[P(\underline{u})=\sum_{\substack{\alpha \in \N^{r+1}\\ |\alpha|=t}} \frac{1}{\alpha!}e_{\alpha}(I_0 \mid I_1,\ldots,I_r) u^{\alpha}+\text{ lower degree terms, }\] 
where $t = \deg P(\underline{u}), \alpha=(\alpha_0, \ldots, \alpha_r) \in \N^{r+1}, \alpha! = \prod_{i=0}^{r} \alpha_i!$ and $|\alpha| = \sum_{i=0}^{r} \alpha_i.$ If $|\alpha|=t$, then $e_{\alpha}(I_0 \mid I_1,\ldots,I_r)$ are called the {\it mixed multiplicities} of the ideals $I_0, I_1,\ldots,I_r$. In \cite{Trung-Verma}, the authors prove the following result.

\begin{theorem}[{\rm{\cite[Theorem 1.2]{Trung-Verma}}}]\label{deg-mixedmul}
	Assume that $d = \dim R/(0 :I^{\infty})\geq 1$, where $I=I_1\cdots I_r$. Then $\deg P(\underline{u})=d-1$.
\end{theorem}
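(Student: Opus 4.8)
The plan is to realise $P(\underline{u})$ as the Hilbert polynomial of an explicit multigraded algebra and then to read off its total degree, getting the upper and lower bounds from two different one-parameter slices of $P$.

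\emph{Setting it up.} I would let $\mathcal{S}=R[I_1t_1,\ldots,I_rt_r]$ be the multi-Rees algebra of $I_1,\ldots,I_r$, put $J=I_0\mathcal{S}$, and form the associated graded ring $G:=\gr_J(\mathcal{S})=\bigoplus_{a\ge 0}J^a/J^{a+1}$: a standard $\N^{r+1}$-graded ring whose $(u_0,\ldots,u_r)$-component is $I_0^{u_0}I_1^{u_1}\cdots I_r^{u_r}/I_0^{u_0+1}I_1^{u_1}\cdots I_r^{u_r}$ and whose degree-$\underline{0}$ piece $R/I_0$ is Artinian because $I_0$ is $\m$-primary. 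By \cite{Verma-katz} one has $\ell(G_{\underline{u}})=P(\underline{u})$ for $\underline{u}\gg 0$, so $\deg P$ is exactly the total degree of the Hilbert polynomial of $G$. Next I would invoke a standard Artin--Rees argument to see that $P$ is unchanged when $R$ is replaced by $\ov{R}=R/(0:_RI^\infty)$, reducing to the case $(0:_RI^\infty)=0$; then $d=\dim R$, $I$ lies in no associated prime of $R$, and (since $I\subseteq I_i$) each $I_i$ contains a nonzerodivisor $a_i$.

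\emph{Upper bound.} Since the $a_i$ are nonzerodivisors, $R[a_1t_1,\ldots,a_rt_r]\subseteq\mathcal{S}$ is a polynomial ring over $R$ in $r$ variables, so $\dim\mathcal{S}\ge\dim R+r$, and as $\mathcal{S}\subseteq R[t_1,\ldots,t_r]$ also $\dim\mathcal{S}\le\dim R+r$; thus $\dim\mathcal{S}=d+r$ and hence $\dim G=\dim\gr_J(\mathcal{S})\le d+r$. Now I would use the fact that the Hilbert polynomial of a standard $\N^{r+1}$-graded algebra finitely generated over an Artinian ring has total degree at most its Krull dimension minus $r+1$ (the multigraded Hilbert-function machinery behind \cite{Verma-katz,Trung-Verma}); applied to $G$ this gives $\deg P\le\dim G-(r+1)\le d-1$.

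\emph{Lower bound and conclusion.} For the reverse inequality I would fix $c\gg 0$, set $M=I_1^c\cdots I_r^c=I^c$, note that $\ann_R M=(0:_RI^\infty)=0$ for $c$ large so that $\dim M=d\ge 1$, and observe that for $u_0\gg 0$ the slice $P(u_0,c,\ldots,c)=\ell(I_0^{u_0}M/I_0^{u_0+1}M)$ is the Hilbert--Samuel polynomial of $M$ with respect to the $\m$-primary ideal $I_0$, hence has degree $\dim M-1=d-1$. So $\deg P\ge d-1$, and together with the upper bound, $\deg P=d-1$.

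\emph{Where the difficulty lies.} The lower bound is essentially the classical one-variable Hilbert--Samuel theory; the work is in the upper bound. Two points need care: the Artin--Rees reduction to $R/(0:I^\infty)$ is genuinely necessary, because otherwise one only gets $\dim\mathcal{S}\le\dim R+r$ and $\dim R$ can strictly exceed $d$, making the estimate too weak; and the multigraded degree bound must be set up so as to account for high-dimensional components of $G$ that are concentrated on coordinate hyperplanes of the grading lattice and therefore do not contribute to the total degree. Establishing that degree bound cleanly (or, alternatively, running the whole argument through a superficial-element induction on $r$, where these same issues reappear) is the technical heart.
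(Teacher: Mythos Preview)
The paper does not prove this theorem at all: it is quoted verbatim from \cite[Theorem~1.2]{Trung-Verma} as background for the algorithms, with no argument supplied. So there is no ``paper's own proof'' to compare your proposal against.

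That said, your sketch is a reasonable outline of how such a result is typically established. The reduction to $R/(0:I^\infty)$ and the lower bound via the one-variable Hilbert--Samuel polynomial of $M=I^c$ are both standard and correct; note that $(0:I^c)=(0:I^\infty)$ for $c\gg 0$ by Noetherianity, so $\dim M=d$ after the reduction, and the slice argument goes through. For the upper bound you have identified the right skeleton ($\dim\mathcal S=d+r$, hence $\dim G\le d+r$), but the inequality $\deg P\le\dim G-(r+1)$ for standard $\N^{r+1}$-graded algebras is not a one-liner: as you yourself flag, irrelevant components supported on coordinate hyperplanes of $\N^{r+1}$ can inflate $\dim G$ without contributing to $\deg P$, and conversely one must check the inequality holds in the presence of such components. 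In the Trung--Verma paper this is handled by passing to the diagonal subalgebra (or an equivalent device) rather than by a bare dimension count on $G$; your formulation leaves that step as a black box. So the proposal is a correct high-level plan with the genuine technical content deferred, which is consistent with how you describe it in your final paragraph.
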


We now briefly describe several applications of mixed multiplicities of ideals. Suppose that $(R,\m)$ is a local ring of positive dimension $d$ and $I_0 \ldots, I_r \subseteq R$ are ideals of positive height.

\noindent
(i) Let $t_0, \ldots, t_r$ be indeterminates. Consider the multi-form rings 
\begin{equation}\label{gr}
\gr_R(I_0, \ldots, I_r; I_i)= \R(I_0, \ldots, I_r)/(I_i) \quad \text{for } i =0, 1, \ldots, r.
\end{equation}
Note that if $I_0$ is an $\m$-primary ideal and $I_1, \ldots, I_r$ are ideals of positive height, then \[\ell\left(\frac{I_0^{u_0}I_1^{u_1} \cdots I_r^{u_r}}{I_0^{u_0+1}I_1^{u_1} \cdots I_r^{u_r}}\right)=\ell([\gr_R(I_0, \ldots, I_r; I_0)]_{\underline{u}})\] is a polynomial of total degree $d-1$ for $u_i \gg 0$ and $i=0, 1, \ldots, r$ as we have observed above. Let us rewrite the terms in total degree $d-1$ in the form 
\[\sum_{\substack{\alpha \in \N^{r+1},\; |\alpha|=d-1}} \frac{1}{\alpha!}e(I_0^{[\alpha_0+1]} |I_1^{[\alpha_1]}| \cdots |I_r^{[\alpha_r]})u_0^{\alpha_0}\cdots u_r^{\alpha_r}.\]
For this case, M. Herrmann and his coauthors \cite{HERRMANN} expressed the multiplicity of $\gr_R(I_0, \ldots, I_r; I_0)$ by means of mixed multiplicities in the following way: 
\[e(\gr_R(I_0, \ldots, I_r; I_0))=\sum_{\substack{\alpha \in \N^{r+1}\\ |\alpha|=d-1}}e(I_0^{[\alpha_0+1]} |I_1^{[\alpha_1]}| \cdots |I_r^{[\alpha_r]}).\] 
(ii) In \cite{VERMA1992}, J. K. Verma proved that 
\[e(\R(I_1 \ldots, I_r))=e(\gr_R(\m,I_1 \ldots, I_r; \m))=\sum_{\substack{\alpha \in \N^{r+1}, \;  |\alpha|=d-1}}e(\m^{[\alpha_0+1]} |I_1^{[\alpha_1]}| \cdots |I_r^{[\alpha_r]}).\]
(iii) In \cite{D'Cruz2003}, C. D'Cruz showed similar result for the multi-graded extended Rees algebra $T:=\mathcal{B}(I_1, \ldots, I_r)$ $= R[I_1t_1, \ldots, I_rt_r, t_1^{-1}, \ldots, t_r^{-1}]$. She showed that 
\[e(T_N)=\frac{1}{2^{d}}\left[\sum\limits_{j=0}^r \sum\limits_{\substack{\beta+\beta_1+\cdots+\beta_j=d-1 \\ 1 \leq i_1< \cdots<i_j \leq r}}2^{\beta_1+\cdots+\beta_j} e(L^{[\beta+1]} | I_{i_1}^{[\beta_1]}| \cdots |I_{i_j}^{[\beta_j]})\right],\] 
where $N:=\mathcal{N}(I_1, \ldots, I_r)=(I_1t_1, \ldots, I_rt_r, \m, t_1^{-1}, \ldots, t_r^{-1})$ and $L=\m^2+I_1+\cdots+I_r$.

In view of the above, we can easily compute multiplicities of Rees algebras, extended Rees algebras and certain form rings, once mixed multiplicities are known. 

\subsection{\bf Mixed volumes of lattice polytopes.}\label{MV}
We first recall the definition of mixed volumes. Let $P,Q$ be two polytopes (not necessarily distinct) in $\mathbb{R}^n.$ Their Minkowski sum is defined as the polytope
\[P + Q := \{a + b \mid a \in P, b \in Q \}.\]
Let $Q_1,\ldots,Q_n$ be an arbitrary collection of lattice polytopes in $\mathbb{R}^n$ and  $\lambda_1, \ldots, \lambda_n \in \mathbb{R}_{+}$. H. Minkowski proved that the function $\vol_n(\lambda_1Q_1 + \cdots + \lambda_n Q_n)$ is a homogeneous polynomial of degree
$n$ in $\lambda_1, \ldots, \lambda_n$, see \cite[Theorem 4.9, Chapter 7]{CLO2}. The coefficient of $\lambda_1 \cdots \lambda_n$ is called {\it the mixed volume} of $Q_1,\ldots,Q_n$ and denoted by $MV_n(Q_1,\ldots, Q_n).$ We set  $[n]:=\{1, \dots, n\}.$ Then
\[ MV_n(Q_1,\ldots,Q_n) :=\sum_{\emptyset \neq J \subseteq [n]} (-1)^{n-|J|} V_n\left(\sum_{j \in J} Q_j\right),\]
see \cite[Theorem 4.12 d., Chapter 7]{CLO2}. Here $V_n$ denotes the $n$-dimensional Euclidean volume. 

In \cite{Trung-Verma}, the authors explore the relationship between mixed multiplicities of multigraded rings and mixed volumes. 

\begin{proposition}[{\rm {\cite[Corollary 2.5]{Trung-Verma}}}]\label{vol-mul}
	Let $Q_1,\ldots,Q_n$ be lattice polytopes in $\mathbb{R}^n$. Suppose that $R = k[x_0, x_1,\ldots, x_n]$ is a polynomial ring and $\m$ is its graded maximal ideal. Let $M_i$ be any set of monomials of the same degree in $R$ such that $Q_i$ is the convex hull of the lattice points of their dehomogenized monomials in $k[x_1,\ldots,x_n]$. Let $I_j$ be the ideal of $R$ generated by the monomials of $M_j$. Then\[MV_n(Q_1,\ldots,Q_n) = e_{(0,1,...,1)}(\m|I_1,\ldots, I_n).\]	
\end{proposition}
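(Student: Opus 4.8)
The plan is to extract the mixed multiplicity as a single coefficient of the polynomial $P(\underline{u})$, to rewrite $P(\underline{u})$ as an exact count of lattice points in a Minkowski sum of dilated finite sets, and then to identify the relevant leading coefficient of that count with a mixed volume. First, since the monomials in each $M_i$ share one degree $d_i$, the ideal $J:=I_1^{u_1}\cdots I_n^{u_n}$ is generated in the single degree $e:=u_1d_1+\cdots+u_nd_n$; as $\m$ is the graded maximal ideal, $\m^{u_0}J/\m^{u_0+1}J$ is concentrated in degree $N:=u_0+e$, where it equals $[J]_N$ (indeed $[\m^{u_0}J]_N=R_{u_0}\cdot[J]_e=[J]_N$ while $[\m^{u_0+1}J]_N=0$). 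Thus $P(\underline{u})=\dim_k[I_1^{u_1}\cdots I_n^{u_n}]_{N}$. Since $R$ is a domain and $I_1\cdots I_n\ne 0$, Theorem~\ref{deg-mixedmul} gives $\deg P=\dim R-1=n$, so $\alpha=(0,1,\ldots,1)$ is a top exponent with $\alpha!=1$; comparing with the defining expansion of $P(\underline{u})$ shows that $e_{(0,1,\ldots,1)}(\m\mid I_1,\ldots,I_n)$ is exactly the coefficient of $u_1\cdots u_n$ in the degree-$n$ homogeneous part of $P(\underline{u})$.

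Next I would make the combinatorics explicit. Let $A_i\subseteq\Z_{\ge 0}^{n+1}$ be the exponent set of $M_i$ (so every element has coordinate sum $d_i$, and the projection $\pi$ forgetting the $0$-th coordinate carries $\operatorname{conv}(A_i)$ onto $Q_i$), and let $A_0$ be the set of exponents of $x_0,\ldots,x_n$. The ideal $I_1^{u_1}\cdots I_n^{u_n}$ is generated by the monomials whose exponents lie in the Minkowski sum $B:=u_1A_1+\cdots+u_nA_n$, whose elements have coordinate sum $e$. Hence a degree-$N$ monomial $x^b$ lies in $I_1^{u_1}\cdots I_n^{u_n}$ if and only if $b=c+r$ with $c\in B$ and $r\in\Z_{\ge 0}^{n+1}$, $|r|=u_0$, i.e.\ $r\in u_0A_0$; therefore
\[ P(\underline{u}) \;=\; \#\!\left(u_0A_0+u_1A_1+\cdots+u_nA_n\right), \]
an exact identity that uses no integral closures. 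Setting $\tilde Q_i:=\operatorname{conv}(A_i)$ (so $\tilde Q_0$ is a standard simplex), the convex hull of the right-hand set is the $n$-dimensional lattice polytope $\Pi(\underline{u}):=u_0\tilde Q_0+\cdots+u_n\tilde Q_n$, lying in the hyperplane $\{y:\sum_j y_j=N\}$.

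Finally I would pass to volumes. The step I expect to be the main obstacle is showing that, for $\underline{u}\gg 0$, the sumset $u_0A_0+u_1A_1+\cdots+u_nA_n$ exhausts the lattice points of $\Pi(\underline{u})$ outside a boundary layer of relative size $O(\|\underline{u}\|^{-1})$: the sets $A_i$ with $i\ge 1$ alone may be confined to a proper sublattice of $\{y\in\Z^{n+1}:\sum_j y_j=0\}$ (for instance when $M_i$ consists of pure squares $x_0^{2},x_1^{2},\dots$), and it is the summand $u_0A_0$ — whose differences already span that lattice — that fills the remaining cosets. Granting this, $P(\underline{u})$ and the Ehrhart count of $\Pi(\underline{u})$ have the same degree-$n$ homogeneous part, which by Ehrhart's theorem and Minkowski's polynomiality theorem is $\vol_n(u_0\tilde Q_0+\cdots+u_n\tilde Q_n)$, homogeneous of degree $n$ in $\underline{u}$, with $\vol_n$ normalized so that a fundamental domain of $\{y\in\Z^{n+1}:\sum_j y_j=0\}$ has volume $1$. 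The coefficient of $u_1\cdots u_n$ in this polynomial is $n!$ times the symmetric mixed volume of $\tilde Q_1,\ldots,\tilde Q_n$ (the simplex $\tilde Q_0$ does not contribute). Since $\pi$ restricts to a lattice isomorphism from $\{y\in\Z^{n+1}:\sum_j y_j=0\}$ onto $\Z^{n}$ carrying each $\tilde Q_i$ to $Q_i$, hence preserving volumes and mixed volumes, this coefficient equals $n!$ times the symmetric mixed volume of $Q_1,\ldots,Q_n$, namely $MV_n(Q_1,\ldots,Q_n)$ by the definition recalled in \S\ref{MV}. Together with the first paragraph this gives $e_{(0,1,\ldots,1)}(\m\mid I_1,\ldots,I_n)=MV_n(Q_1,\ldots,Q_n)$.
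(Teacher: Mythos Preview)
The paper does not contain its own proof of this proposition: it is quoted verbatim as \cite[Corollary~2.5]{Trung-Verma} and used as a black box, so there is nothing in the present article to compare your argument against line by line.

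That said, your outline is the natural route and matches the strategy behind the Trung--Verma result: reduce the length $\ell(\m^{u_0}J/\m^{u_0+1}J)$ to the lattice-point count $\#\bigl(u_0A_0+u_1A_1+\cdots+u_nA_n\bigr)$, and then read off the coefficient of $u_1\cdots u_n$ as a mixed volume. Your first two paragraphs are correct as written; the identification $P(\underline{u})=\dim_k[J]_N$ and the sumset description are clean and valid.

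The honest ``Granting this'' in your third paragraph is exactly where the work lies, and you have located the difficulty precisely. The issue is not merely a boundary layer: for a fixed $u_0$ the translates by $u_0A_0$ cover only boundedly many cosets of the sublattice generated by the differences in $A_1,\ldots,A_n$, so one cannot simply let $u_1,\ldots,u_n\to\infty$ with $u_0$ bounded and expect the sumset to fill $\Pi(\underline{u})$ up to $O(\|\underline{u}\|^{n-1})$. What saves you is that $u_0A_0$ already equals the \emph{full} set of lattice points of the dilated simplex $u_0\tilde Q_0$ (not just its vertices), so for any lattice point $p\in\Pi(\underline{u})$ one can approximate the summands $q_i\in u_i\tilde Q_i$ by points of $u_iA_i$ with bounded error and absorb the discrepancy into the $u_0A_0$ piece, provided $p$ is at bounded distance from the facets of $\Pi(\underline{u})$ where some coordinate vanishes. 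Making this precise (or invoking Khovanskii's theorem on the eventual polynomiality of $\#(u_0A_0+\cdots+u_nA_n)$ together with the obvious upper bound by the Ehrhart count) closes the gap.
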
	

Mixed volumes of lattice polytopes have diverse applications. We describe a few of them. 

\vspace{0.2cm}
(1) In 1975, D. N. Bernstein proved that the mixed volume of the  Newton polytopes of Laurent polynomials $f_1, \ldots, f_n \in k[x_1^{\pm 1}, \ldots, x_n^{\pm 1}]$ is a sharp upper bound for the number of isolated common zeros in the torus $(k^*)^n$ where $k$ is an algebraically closed field. Furthermore, this bound is attained for a generic choice of coefficients in these $n$ polynomials if  $\chars k=0$.

(2) In 1993, W. Fulton proved that the mixed volume of the Newton polytopes of Laurent polynomials $f_1, \ldots, f_n \in k[x_1^{\pm 1}, \ldots, x_n^{\pm 1}]$ is an upper bound of $\sum_{\alpha}i(\alpha)$, where $\alpha \in (\overline{k}^*)^n$ is any isolated common zero and $i(\alpha)$ denotes the intersection multiplicity at $\alpha.$

(3) Let $P$ be a lattice polytope of dimension $n$ and $|P \cap \Z^n|$ denote the number of lattice points. In 1962, E. Ehrhart proved that the function $L(t \cdot P)=|tP \cap \Z^n|$, for all non-negative integers $t$, is a polynomial of degree $n$ denoted by 
$E_P(t)\in \QQ[t]$. The polynomial $E_P(t)$ is called the \emph{Ehrhart polynomial} of $P$. For lattice polytopes $P_1, \ldots, P_k \subseteq \mathbb{R}^n$ in the integer lattice $\Z^n$, we have the following {\it mixed Ehrhart polynomial} in one variable $t$: \[ME_{P_1, \ldots, P_k}(t)=\sum_{\emptyset \neq J \subseteq [k]} (-1)^{k-|J|} L(t \cdot \sum_{j \in J} P_j) \in \QQ[t].\]
If $\dim\left(\sum_{i=1}^k P_i\right)=n$, then by \cite[Corollary 2.5]{HaaJuhMarRamTho}, 
the leading coefficient of $ME_{P_1, \ldots, P_k}(t)$ is 
\[\sum \limits_{\substack{\sum_{i=1}^k s_i=n, ~s_i \geq 1}}\binom{n}{s_1,\ldots, s_k}MV_n\left(P_1[s_1], \ldots, P_k[s_k]\right),\]
where $\binom{n}{s_1,\ldots, s_k}:=\frac{n!}{s_1!\cdots s_k!}$ and $MV_n\left(P_1[s_1], \ldots, P_k[s_k]\right)$ means that the polytope $P_j$ is taken $s_j$ times. In particular, when $k=n$, 
\[ME_{P_1, \ldots, P_n}(t)= n! MV_n(P_1,\ldots,P_n) t^n.\]
The coefficient of $t^{n-1}$ in $ME_{P_1, \ldots, P_k}(t)$ is also described in \cite[Corollary 2.6]{HaaJuhMarRamTho}.

\subsection{\bf  Sectional Milnor numbers.} 
Suppose that the origin is an isolated singular point of a complex analytic hypersurface $H=V(f)\subset \mathbb{C}^{n+1}.$ Let $f_{z_i}$ denote the partial derivative of $f$ with respect to $z_i.$ Set
\[\mu=\dim_{\mathbb{C}}\frac{\mathbb{C}\{z_0, z_1, \dots, z_n\}}{(f_{z_0}, f_{z_1},\dots, f_{z_n})}.\]
The number $\mu$ is called the {\it Milnor number} of the hypersurface $H$ at the origin. In his Carg\`ese paper \cite{teissier1973}, B. Teissier refined the notion of Milnor number by replacing it with a sequence of Milnor numbers of intersections with general linear subspaces. Let $(X, 0)$ be a germ of a hypersurface in  $\mathbb{C}^{n+1}$ with an isolated singularity. The Milnor number of $X\cap E$, where $E$ is a general linear subspace of dimension $i$ passing through the origin, is called the {\it $i^{th}$-sectional Milnor number} of $X$ at the origin. It is denoted by $\mu^{(i)}(X, 0).$ These are collected together in  the sequence
\[\mu^*(X, 0) = (\mu^{(n+1)}(X, 0), \mu^{(n)} (X, 0), \ldots , \mu^{(0)}(X, 0)).\]

Let $R=\mathbb{C}[x_0,x_1,\ldots,x_n]$ be a polynomial ring in $n+1$ variables, $\mathfrak{m}$ be the maximal graded ideal and $f \in R$ be any polynomial having an isolated singularity at the origin. Recall that a point $a \in V(f)$ is said to be \emph{singular} if the rank of the $1 \times n$ matrix $J(f)|_{a}$ is zero, where $J(f)=(f_{x_0},f_{x_1},\ldots,f_{x_n})$ denotes the \emph{Jacobian matrix}. A polynomial $f$ is said to have an isolated singularity at a point $a$ if $\{a\}$ is a (connected) component of the variety $\sing(f)=\{a \in V(f) \mid J(f)|_{a}=0\}$. We use the same notation $J(f)$ to denote the \emph{Jacobian ideal} which is generated by the entries of the Jacobian matrix associated to $f$. Teissier proved that the $i^{th}$-mixed multiplicity, denoted by  $e_i(\m|J(f))$ or $e(\m^{[n+1-i]}, J(f)^{[i]})$ is equal to the $i^{th}$-sectional Milnor number of the singularity. Note that the the $(n+1)^{th}$ sectional Milnor number of $f$ is the Minor number of $f,$ namely, $\dim_CR/J(f).$

Let $f \in \C[x_0, x_1, \ldots, x_n]$ be a homogeneous polynomial in positive degree. Set $D(f)= \mathbb{P}^n \backslash V(f)$. In \cite[p. 6]{June12}, J. Huh showed that {\it the Euler Characteristic} of $D(f)$ is given by 
\[\chi(D(f))=\sum\limits_{i=0}^n (-1)^i e_i(\m \mid J(f)).\] 

We now describe the  contents of the paper. In section 2, we generalize the result of Cox, Lin and Sosa to find the defining ideal of multi-Rees algebra of any collection of ideals having positive grade in a local ring or in a standard graded algebra over a field. For any ideal $J$ in a Noetherian ring $B$, we define the {\it grade of $J$} to be the common length of the maximal $B$-sequences in $J$ and denote it by $\grade(J)$. In view of our result, we write a Macaulay2 algorithm to compute the defining ideal. The computation time of these algorithms is compared with the computation time of the Macaulay2 function \texttt{reesIdeal} in \cite{GoeMukRoyVer}.

In section 3, we present a Macaulay2 algorithm which calculates mixed multiplicities of ideals in a polynomial ring. We start with generalizing a result of D. Katz, S. Mandal and J. K. Verma, to give a precise formula for the Hilbert polynomial of the quotient of a multi-graded algebra over an Artinian local ring. This  result helps to calculate the mixed multiplicities of a set of ideals $I_0, I_1, \ldots, I_r$, where $I_0$ is primary to the maximal ideal and $I_0, I_1, \ldots, I_r$ are ideals in a local ring or in a standard graded algebra over a field.  The algorithm computes their mixed multiplicities, where $\grade(I_j)>0$ for all $j.$ 

Section 4 contains an in depth explanation of the codes which are used to write the function \texttt{mixedMultiplicity} in the package \href{https://faculty.math.illinois.edu/Macaulay2/doc/Macaulay2-1.19.1/share/doc/Macaulay2/MixedMultiplicity/html/index.html}{\texttt{MixedMultiplicity}}. This is meant for the new users of Macaulay2.

In section 5, we give an algorithm which computes mixed volume of a collection of lattice  polytopes in $\mathbb{R}^n.$ We start with an algorithm which outputs the homogeneous ideal corresponding to the vertices of a lattice  polytope. Given a collection of lattice polytopes in $\mathbb{R}^n$, N. V. Trung and the last author proved that their mixed volume is equal to a mixed multiplicity of a set of homogeneous ideals. The function \texttt{mMixedVolume} is written based on their results to calculate the mixed volume. 

In the last section, we give an algorithm to compute the sectional Milnor numbers. We use Teissier's observation of identifying the sectional Milnor numbers with mixed multiplicities to achieve this task. Let $f$ be a polynomial in the ring $k[x_1,\ldots,x_n],$ where $\text{char} (k) = 0$ and $J(f)$ is $\m$-primary. With an input of the polynomial $f$, the function \texttt{secMilnorNumbers} outputs $e_0(\m \mid J(f)), \ldots, e_{n}(\m \mid J(f)).$ Teissier \cite{teissier1973}   conjectured that invariance of the Milnor number of an isolated singularity in a family of hypersurfaces implies invariance of the sectional Milnor numbers. The conjecture was disproved by Jo\"el Brian\c{c}on and Jean-Paul Speder. We verify their example using our algorithm and also by explicitly calculating the mixed multiplicities.

The algorithms written for the computation of defining ideal, mixed multiplicities, mixed volume and sectional Milnor numbers are used in the Macaulay2 (\cite{M2}) package \texttt{MixedMultiplicity}. Macaulay2 is a software system devoted to support research in algebraic geometry and commutative algebra.

\section{Defining equations of multi-Rees algebras of ideals}
An explicit formula for the defining ideal of the multi-Rees algebra of a finite collection of monomial ideals in a polynomial ring was given by D. Cox, K.-N. Lin, and G. Sosa in \cite{CLS}. In this section, we generalize their result to find the defining ideal of the multi-Rees algebra of a collection of ideals with positive grade in a Noetherian ring. We use this result to write a Macaulay2 algorithm to compute the defining ideal when the base ring is a domain. We provide another algorithm for the non-domain case. 

Let $R$ be a Noetherian ring and $I_1,\ldots, I_s \subseteq R$ be ideals. Suppose that $I_i = \langle f_{ij} \mid j=1,\ldots,n_i \rangle$ for all $i = 1,\dots,s$. Let $\R(I_1,\ldots,I_s)$ be the multi-Rees algebra of ideals $I_1,\ldots,I_s.$ Consider the set of indeterminates $\underline{Y} = \{Y_{ij} \mid i=1,\ldots,s, j=1,\ldots,n_i \}$ and $\underline{T}=(T_1,\ldots,T_s)$ over $R.$ Define an $R$-algebra homomorphism
$R[\underline{Y}] \overset{\varphi} \longrightarrow \R(I_1,\ldots,I_s) \subseteq R[\underline{T}]$
such that $\varphi(Y_{ij}) = f_{ij}T_i$, for all $i=1,\ldots,s$, $j=1,\ldots,n_i$ and $\varphi(r) = r$ for all $r \in R.$ Then $\mathcal{R}(I_1,\ldots,I_s) \simeq R[\underline{Y}] / \ker(\varphi).$ The ideal $\ker \varphi$ is called the defining ideal of $\R(I_1, \ldots, I_s)$. We give an explicit description of $\ker(\varphi).$ 

\begin{theorem} \label{TisRees}
Let $R$ be a Noetherian ring and $I_1,\ldots, I_s \subseteq R$ be ideals of positive grade. For each $i$, consider a generating set $\{f_{ij} \mid j=1, \ldots, n_i\}$ of $I_i$ which contains at least one nonzerodivisor $f_{ij_i}$. We set $h=\prod_{i=1}^s f_{ij_i}$ and set
\[ \Gamma = \langle Y_{ij} f_{ij'} - Y_{ij'} f_{ij} \mid i=1,\ldots,s \text{ and } j, j' \in \{1,\ldots,n_i\} \mbox{ with } j \neq j' \rangle : h^\infty \subseteq R[\underline{Y}]. \]	
Then $\Gamma \subseteq R[\underline{Y}]$ is the defining ideal of $\mathcal{R}(I_1,\ldots,I_s)$. 
\end{theorem}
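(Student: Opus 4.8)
The plan is to compare $\ker\varphi$ with the ideal of the ``obvious'' relations
\[
\Gamma_0 := \langle Y_{ij} f_{ij'} - Y_{ij'} f_{ij} \mid i=1,\ldots,s \text{ and } j\neq j' \rangle \subseteq R[\underline{Y}],
\]
so that $\Gamma = \Gamma_0 : h^\infty$, and to pin down $\ker\varphi$ by localizing at $h$. The easy inclusion $\Gamma\subseteq\ker\varphi$ comes first: $\varphi(Y_{ij}f_{ij'}-Y_{ij'}f_{ij}) = f_{ij}f_{ij'}T_i - f_{ij'}f_{ij}T_i = 0$, so $\Gamma_0\subseteq\ker\varphi$; and since $h=\prod_i f_{ij_i}$ is a product of nonzerodivisors of $R$, it is a nonzerodivisor of $R[\underline{T}]$ (a free $R$-module), hence of the subalgebra $\mathcal{R}(I_1,\ldots,I_s)\cong S/\ker\varphi$. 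Therefore $\ker\varphi$ is $h$-saturated, and $\Gamma = \Gamma_0:h^\infty \subseteq \ker\varphi : h^\infty = \ker\varphi$.

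For the reverse inclusion I would pass to the localization $S_h = R[\underline{Y}][h^{-1}]$. Exactness of localization gives $(\ker\varphi)S_h = \ker(\varphi_h)$, where $\varphi_h\colon S_h \to \mathcal{R}(I_1,\ldots,I_s)_h$ is induced by $\varphi$; moreover $\mathcal{R}(I_1,\ldots,I_s)_h = R_h[\underline{T}]$, because each $f_{ij_i}$, and hence each $I_iR_h$, becomes a unit in $R_h$. Because $h$ is a nonzerodivisor on $\mathcal{R}(I_1,\ldots,I_s)$, contracting along $S\to S_h$ recovers the kernel, i.e. $\ker\varphi = \ker(\varphi_h)\cap S$. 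Consequently it suffices to prove $\ker(\varphi_h) = \Gamma_0 S_h$; contracting then yields $\ker\varphi = \Gamma_0 S_h\cap S = \Gamma_0 : h^\infty = \Gamma$, as wanted.

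The core computation is that of $\ker(\varphi_h)$. Put $u_i := f_{ij_i}$, a unit of $R_h$. Modulo $\Gamma_0 S_h$ the relation $Y_{ij}u_i - Y_{ij_i}f_{ij}$ forces $Y_{ij}\equiv u_i^{-1}f_{ij}Y_{ij_i}$, so $R_h[\underline{Y}]/\Gamma_0 S_h$ is generated over $R_h$ by the classes of $Y_{1j_1},\ldots,Y_{sj_s}$; substituting $Y_{ij}\mapsto u_i^{-1}f_{ij}Y_{ij_i}$ into the remaining generators $Y_{ij}f_{ij'}-Y_{ij'}f_{ij}$ of $\Gamma_0$ makes them vanish identically, and no relation survives among the distinguished variables, so $R_h[\underline{Y}]/\Gamma_0 S_h \cong R_h[Z_1,\ldots,Z_s]$ is a polynomial ring in $s$ variables. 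Under $\varphi_h$ the class of $Z_i = Y_{ij_i}$ maps to $u_iT_i$, which is a polynomial generating set of $R_h[\underline{T}]$, so the induced map $R_h[Z_1,\ldots,Z_s]\to R_h[\underline{T}]$ is an isomorphism; hence $\ker(\varphi_h) = \Gamma_0 S_h$.

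I expect this last paragraph to be the only real obstacle: one must check carefully that after inverting $h$ the naive bilinear relations already present a polynomial ring, i.e. that the substituted relations vanish and that $Y_{1j_1},\ldots,Y_{sj_s}$ are algebraically independent over $R_h$ modulo $\Gamma_0 S_h$. The remaining ingredients --- the containment $\Gamma_0\subseteq\ker\varphi$, the nonzerodivisor bookkeeping for $h$, the identification $\mathcal{R}(I_1,\ldots,I_s)_h = R_h[\underline{T}]$, and the contraction step --- are formal consequences of exactness of localization.
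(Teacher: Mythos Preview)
Your proposal is correct and follows essentially the same strategy as the paper: invert $h$, identify the kernel over $R_h$ with the extension of the ``obvious'' $2\times 2$ relations (the paper writes this extension as $J=\sum_i J_i$ with $J_i=(Y_{ij}-\tfrac{f_{ij}}{f_{ij_i}}Y_{ij_i})$, which equals your $\Gamma_0 S_h$ since the $f_{ij_i}$ are units), and then contract back using that $h$ is a nonzerodivisor on $\mathcal{R}(I_1,\ldots,I_s)$. The only cosmetic difference is that you isolate the easy inclusion $\Gamma\subseteq\ker\varphi$ up front via $h$-saturation of $\ker\varphi$, whereas the paper obtains both inclusions simultaneously from the contraction identity.
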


\begin{proof}
Without loss of generality, we may assume that $j_i=1$ for all $i=1,\ldots,s$ and $h=\prod_{i=1}^s f_{i1}$. Consider the ring homomorphism $\phi: R \to R[f_{11}^{-1}, f_{21}^{-1}, \ldots, f_{s1}^{-1}] \cong R[h^{-1}]$, which induces a natural map $\widetilde{\phi}: R[\underline{Y}] \to R[h^{-1}][\underline{Y}]\cong R[\underline{Y}]_h$. Note that $\phi(I_i)=(1, f_{i2}/f_{i1}, \ldots, f_{in_i}/f_{i1})\subseteq R[h^{-1}]$ is the unit ideal for $i=1, \ldots, s$. Consider the ring homomorphism 
\[\theta: R[h^{-1}][\underline{Y}] \to \mathcal{R}(\phi(I_1),\ldots,\phi(I_s))\subseteq R[h^{-1}][\underline{T}]\] 
sending $Y_{i1}$ to $T_i$ and $Y_{ij}$ to $\frac{f_{ij}}{f_{i1}}T_i$ for $i=1, \ldots, s$ and $j=1, \ldots, n_i$. Then we show that the defining ideal $\ker \theta$ of $\mathcal{R}(\phi(I_1),\ldots,\phi(I_s))$ is $J:=J_1+\cdots+J_s$, where $J_i:=(Y_{i2}-\frac{f_{i2}}{f_{i1}}Y_{i1}, \ldots,  Y_{in_i}-\frac{f_{in_i}}{f_{i1}}Y_{i1})$. Clearly, $J \subseteq \ker \theta$. On the other hand,
\[\frac{R[h^{-1}][\underline{Y}]}{J} \cong R[h^{-1}][Y_{i1} \mid i=1, \ldots, s] \cong R[h^{-1}][\underline{T}].\]
This forces $\ker \theta =J$, as the last isomorphism is via $\theta$. We further claim that $\widetilde{\phi}^{-1}(J) = \Gamma.$ For all $j \neq j',$ and every $i$,
\[f_{ij}Y_{ij'}-f_{ij'}Y_{ij}=f_{ij}\left(Y_{ij'}-\frac{f_{ij'}}{f_{i1}}Y_{i1}\right)-f_{ij'}\left(Y_{ij}-\frac{f_{ij}}{f_{i1}}Y_{i1}\right)\] 
is in $J_i$. So $\Gamma \subseteq \widetilde{\phi}^{-1}(J)$. Now let $r \in \widetilde{\phi}^{-1}(J)$. Then $\widetilde{\phi}(r) \in J$, i.e.,
\[\frac{r}{1} =\sum_{i=1}^s\sum_{j=2}^{n_i} \frac{a_{ij}}{h^{m_{ij}}}\left(Y_{ij}-\frac{f_{ij}}{f_{i1}}Y_{i1}\right)\]
for some $a_{ij} \in R[\underline{Y}]$. Thus we have 
\[h^{m} r \in (f_{i1}Y_{ij}-f_{ij}Y_{i1}\mid 1 \leq i \leq s \text{ and } 1 \leq j \leq n_i)\subseteq (f_{ij}Y_{ij'}-f_{ij'}Y_{ij} \mid 1 \leq i \leq s \text{ and } 1 \leq j, j' \leq n_i) \]
for some $m \geq \max\{m_{ij}\mid 1 \leq i \leq s, 1 \leq j \leq n_i\}+1$. Therefore, $r \in \Gamma$ and hence the claim holds. Next, observe that $h^t$ is a nonzerodivisor on $R$ and hence on $R[\underline{Y}]/\mathcal{K}\cong \mathcal{R}(I_1, \ldots, I_s)$ for every $t \geq 1$. So $\mathcal{K}$ is a contracted ideal by \cite[Proposition 3.11 $\mbox{(iii)}$]{Atiyah-MacDonald}. Besides, $ \mathcal{R}(\phi(I_1),\ldots,\phi(I_s)) \cong \mathcal{R}(I_1, \ldots, I_s)_h$. Therefore, \[R[h^{-1}][\underline{Y}]/J \cong \mathcal{R}(I_1, \ldots, I_s)_h \cong \left(R[\underline{Y}]/\mathcal{K}\right)_h \cong  R[h^{-1}][\underline{Y}]/\mathcal{K}_h.\] 
As $J=\ker \theta=\mathcal{K}_h=\langle\widetilde{\phi}(\mathcal{K})\rangle$ and $\mathcal{K}$ is a contracted ideal so we get that $\mathcal{K}=\widetilde{\phi}^{-1}(J)=\Gamma$.
\end{proof}

When $R$ is a domain or when a list of nonzerodivisors (one each from the list of ideals with positive grades) is provided by the user, the function \texttt{multiReesIdeal} computes the defining ideal of the multi-Rees algebra using Theorem \ref{TisRees}.  

\noindent
\begin{algorithm*}[Version I: \texttt{multiReesIdeal}, set of ideals with positive grade]
Let $I_1,\ldots,I_s$ be ideals of a Noetherian ring $R$ with $\grade I_i>0$ for all $i$ and let $a_1, \ldots, a_s$ be a set of nonzerodivisors, where $a_i$ belongs to the generating set of $I_i$ for all $i.$ When $R$ is a domain, the function picks $a_i$ to be the first element in the generating set of $I_i$ for each $i$.\\
{\bf Input}: The list $W = \{\{I_1,\ldots,I_s\},\{a_1, \ldots, a_s\}\}$, or $W = \{I_1,\ldots,I_s\}$ if $R$ is a domain.
\begin{enumerate}[1.]
	\item Define a polynomial ring $S$ by attaching $m$ indeterminates to the ring $R$, where $m$ is the sum of the number of generators of all the ideals. 
	
	\item For each ideal $I_i$, construct a matrix $M(i)$ whose first row consists of the generators of the ideal and the second row consists of the indeterminates. 
	
	\item Add the ideals generated by $2 \times 2$ minors of these matrices to get an ideal $L.$ 
	
	\item To get the defining ideal, saturate $L$ with the product of $a_i$'s. 
\end{enumerate} 
{\bf Output}: The defining ideal of the Rees algebra $\R(I_1,\ldots,I_s).$ 	
\end{algorithm*}

The elements of the defining ideal are assigned $\mathbb{N}^{s+1}$ degree by the function, where the first $\mathbb{N}^{s}$ coordinates point to the component of $\mathcal{R}(I_1,\ldots,I_s)$ where the element lies and the last coordinate is the degree of the element. Observe that the defining ideal in the following example is generated in degrees $(1,0,5)$. The first two coordinates asserts that the generator lies in $I_1t_1$, whereas the last coordinate specifies that the element belongs to ${(I_1)}_5\subseteq R_5$.

\begin{example}
	{\small
		\setlength{\columnsep}{-15pt}
		\begin{multicols}{2}
			\begin{verbatim}
				i1 : R = QQ[w,x,y,z];
				i2 : I = ideal(x^2-y*w, x^3-z*w^2);
				i3 : J = ideal(w^2+x^2+y^2+z^2);
				i4 : M = multiReesIdeal {I,J};
				i5 : transpose gens M
				o5 = {-1, 0, -5} | (x3-w2z)X_0+(-x2+wy)X_1 |
				i6 : (first entries gens M)/degree 
				o6 : {{1, 0, 5}}   
			\end{verbatim}
		\end{multicols}
	}
\end{example} 

\subsection{Routine for the non-domain case} In this section we present an algorithm to find the defining ideal of the Rees algebra using the definition of Rees algebra.
This method does not have any requirements on the grade of the ideals or the domain property of the ring, but it seems to be comparably slower than the previous method. 

We can construct the Rees algebra of $I_i$ as the kernel of the map $\varphi_i:R[Y_{i1},\dots,Y_{in_i}]\rightarrow R[T_i]$ where $\varphi_i(Y_{ij})=f_{ij}T_i$ for $j=1, \ldots, n_i$. Notice that the idea $(\ker\varphi_i )R[\underline{Y}]\subseteq \ker \varphi$. Suppose that $\phi_i$ is the presentation matrix of $I_i$ for $i=1,\dots,s$. Then the symmetric algebra $\sym(I_i)$ has a presentation $R[Y_{i1},\dots,Y_{in_i}]/\mathcal{L}_i$ where $\mathcal{L}_i=I_1([Y_{i1},\dots,Y_{in_i}]\cdot \phi_i)$. Clearly, $\mathcal{L}_i \subseteq \ker \varphi_i\subset \ker\varphi$. So the map $\varphi_i$ factors through the symmetric algebra $\sym(I_i)$. Now $\sym(I_1)\otimes\cdots\otimes \sym(I_s)$ has the presentation $R[\underline{Y}]/(\mathcal{L}_1+\cdots+\mathcal{L}_s)$. Since each of $\mathcal{L}_i\subseteq\ker \varphi$, the map $\varphi$ also factors through $\sym(I_1)\otimes\cdots\otimes \sym(I_s)$. Thus to find the defining ideal of the multi Rees algebra $\mathcal{R}(I_1,\dots,I_s)$ it is enough to find the kernel of the surjective map $\sym(I_1)\otimes\cdots\otimes \sym(I_s)\rightarrow \mathcal{R}(I_1,\dots,I_s)$.

\vspace{0.15cm}
\noindent
\begin{algorithm*}[Version II: \texttt{multiReesIdeal}, no assumptions] 
Let $I_1,\ldots,I_s$ be ideals in the Noetherian ring $R$. \\
{\bf Input}: The list $W = \{I_1,\ldots,I_s\}.$
\begin{enumerate}[1.]
	\item For each $I_i$ compute the presentation $F_i'\xrightarrow{\phi_i} F_i \rightarrow I_i\rightarrow 0$, where $\phi_i$ is the presentation matrix of $I_i$ for $i=1,\dots,s$. 
	\item Now compute the source symmetric algebra  $\sym(F_1')\otimes\cdots\otimes\sym(F_s')$ and the target symmetric algebra $\sym(F_1)\otimes\cdots\otimes\sym(F_s)$ of the map $\phi_1 \otimes \cdots \otimes \phi_s$.
	\item Compute the map between the symmetric algebra of the source and target and return kernel of the above map.
\end{enumerate} 
{\bf Output}: The defining ideal of the Rees algebra $\R(I_1,\ldots,I_s).$ 
\end{algorithm*}

In the following example the ring $U$ is not a domain and hence the algorithm uses the above method. As expected, the computational time in the case a nonzerodivisor is given as an optional input is faster than the case where no optional input is given.

\begin{example}
	{\small 
		\begin{verbatim}
			i1 : R = QQ[w,x,y,z]/ideal(w*x, y*z);
			i2 : m = ideal vars R;
			i3 : time multiReesIdeal m
			-- used 0.290619 seconds
			o3 = ideal(y*X, z*X, z*X -x*X, y*X -x*X, w*X, z*X -w*X, y*X -w*X, x*X, X X, X X)
			              3    2    1    3    1    2    1    0    3    0    2    0  2 3  0 1
			i4 : time multiReesIdeal (m, w+x);
			-- used 0.0283374 seconds
		\end{verbatim}
	}  
\end{example}


\section{Computation of mixed multiplicities of multi-graded algebras}

In \cite{Verma-Katz-Mondal}, D. Katz, S. Mandal  and the last author, found a precise formula for the Hilbert polynomial of the quotient of a bi-graded algebra over an Artinian local ring. We generalize this result for quotient of a multi-graded algebra over an Artinian local ring. This improvement helps in the calculation of the mixed multiplicities of a set of ideals $I_0, I_1, \ldots, I_r$, where $I_0$ is primary to the maximal ideal and $I_0, I_1, \ldots, I_r$ are ideals in a local ring or in a standard graded algebra over a field. We end the section by giving an algorithm to compute the mixed multiplicity of a sequence of ideals $I_0,I_1,\ldots,I_r$ of a polynomial ring, where $I_0$ is an primary to the maximal ideal and $\hgt (I_j)>0$ for all $j.$ 

Let $S$ be an Artinian local ring and $A = S[X_1,\ldots,X_r]$ be an $\N^r$-graded ring over $S,$ where for $1 \leq i \leq r$, $X_i=\{X_i(0), \ldots, X_i(s_i)\}$ is a set of indeterminates of degree $e_i.$ Set $\underline{u}=(u_1, \ldots, u_r) \in \N^r$ and $|u|=u_1+\cdots+u_r$. Then $A = \bigoplus_{\underline{u}\in \N^r} A_{\underline{u}}$, where $A_{\underline{u}}$ is the $S$-module generated by monomials of the form $P_1\cdots P_r$, where $P_i$ is a monomial of degree $u_i$ in $X_i$. An element in $A_{\underline{u}}$ is called multi-homogeneous of degree $\underline{u}$. An ideal $I \subseteq A$ generated by multi-homogeneous elements is called a multi-homogeneous ideal. Then $R=A/I$ is an $\N^r$-graded algebra with $\underline{u}$-graded component $R_{\underline{u}}=A_{\underline{u}}/I_{\underline{u}}$. The Hilbert function of $R$ is defined as $H(\underline{u})=\lambda(R_{\underline{u}}),$ where $\lambda$ denotes the length as an $S$-module. Set $\underline{t}^{\underline{u}}=t_1^{u_1}\cdots t_r^{u_r}$. The Hilbert series of $R$ is given by $HS(R,\underline{t})=\sum_{\underline{u}\in \N^r} \lambda(R_{\underline{u}})\underline{t}^{\underline{u}}$. Then there exists a polynomial $N(t_1, \ldots, t_r) \in \Z[t_1, \ldots, t_r]$ so that 
\begin{equation}\label{hilSer}
HS(R,\underline{t})=N(t_1,\ldots, t_r)/\left((1-t_1)^{s_1+1} \cdots (1-t_r)^{s_r+1}\right). 
\end{equation}
Note that the rational function on the right hand side need not be reduced.

\begin{theorem}\label{thm1}
	Write the Hilbert polynomial of $R$ as
	\begin{equation}\label{eq11}
	P(\underline{u}, R) = \sum_{\alpha=\underline{0}}^{\underline{s}} c_{\alpha}\binom{u_1+\alpha_1}{\alpha_1} \cdots \binom{u_r+\alpha_r}{\alpha_r}.
	\end{equation}
	Then
	\[c_{\alpha}=\frac{(-1)^{|\underline{s}-\alpha|}}{(s_1-\alpha_1)! \cdots (s_r-\alpha_r)!} \cdot \frac{\partial^{|\underline{s}-\alpha|}N}{\partial t_1^{s_1-\alpha_1} \cdots \partial t_r^{s_r-\alpha_r}}\scalebox{2.5}{\ensuremath \mid}_{(t_1, \ldots, t_r)=\underline{1}}.\]
\end{theorem}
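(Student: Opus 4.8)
The plan is to sidestep any manipulation of the (possibly non‑reduced) rational function in \eqref{hilSer} and instead expand everything combinatorially, then match coefficients in the basis $\bigl\{\prod_{i=1}^r\binom{u_i+\alpha_i}{\alpha_i}:\underline{0}\le\alpha\le\underline{s}\bigr\}$ of the space of polynomials of degree $\le s_i$ in $u_i$ (which contains $P(\underline{u},R)$). First I would write $N(\underline{t})=\sum_{\underline{v}\in\N^r}a_{\underline{v}}\underline{t}^{\underline{v}}$ (a finite sum) and plug $\frac{1}{(1-t_i)^{s_i+1}}=\sum_{w\ge 0}\binom{w+s_i}{s_i}t_i^{w}$ into \eqref{hilSer} to read off
\[\lambda(R_{\underline{u}})=\sum_{\underline{v}\le\underline{u}}a_{\underline{v}}\prod_{i=1}^r\binom{u_i-v_i+s_i}{s_i}\qquad(\underline{u}\in\N^r).\]
Choosing $B$ larger than every exponent occurring in $N$, for all $\underline{u}$ with every $u_i\ge B$ the condition $\underline{v}\le\underline{u}$ is vacuous on $\operatorname{supp}N$ and the binomial coefficients coincide with the polynomial $\frac{1}{s_i!}(u_i-v_i+s_i)\cdots(u_i-v_i+1)$, so $P(\underline{u},R)=\sum_{\underline{v}}a_{\underline{v}}\prod_{i}\binom{u_i-v_i+s_i}{s_i}$ as polynomials in $\underline{u}$.

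The key input is the one-variable re-expansion identity: as polynomials in $u$,
\[\binom{u-v+s}{s}=\sum_{\beta=0}^{s}(-1)^{s-\beta}\binom{v}{s-\beta}\binom{u+\beta}{\beta}\qquad(v,s\in\N).\]
I would prove this by generating functions: writing the left side as $\sum_{\beta}\gamma_\beta\binom{u+\beta}{\beta}$ and summing $\cdot\,t^u$ over $u\ge 0$ (using $\sum_{u\ge0}\binom{u+\beta}{\beta}t^u=(1-t)^{-\beta-1}$) gives $\sum_{\beta=0}^{s}\gamma_\beta(1-t)^{s-\beta}=t^{v}+Q(t)(1-t)^{s+1}$, where $Q(t)$ collects the finitely many indices $0\le u<v$ at which the polynomial value of $\binom{u-v+s}{s}$ may differ from the ordinary binomial coefficient; then the $(s-\beta)$-th derivative at $t=1$ kills the $Q$-term (which still carries a factor of $1-t$) and isolates $\gamma_\beta=(-1)^{s-\beta}\binom{v}{s-\beta}$.

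Substituting this identity in each variable into the formula for $P(\underline{u},R)$ and comparing with \eqref{eq11} (the $\prod_i\binom{u_i+\alpha_i}{\alpha_i}$ being a basis) gives
\[c_\alpha=(-1)^{|\underline{s}-\alpha|}\sum_{\underline{v}}a_{\underline{v}}\prod_{i=1}^r\binom{v_i}{s_i-\alpha_i}.\]
To finish I would recognize this sum as a derivative of $N$: differentiating $N(\underline{t})=\sum_{\underline{v}}a_{\underline{v}}\underline{t}^{\underline{v}}$ term by term,
\[\frac{\partial^{|\underline{s}-\alpha|}N}{\partial t_1^{s_1-\alpha_1}\cdots\partial t_r^{s_r-\alpha_r}}\Big|_{(t_1,\dots,t_r)=\underline{1}}=\prod_{i=1}^r(s_i-\alpha_i)!\;\sum_{\underline{v}}a_{\underline{v}}\prod_{i=1}^r\binom{v_i}{s_i-\alpha_i},\]
and dividing yields exactly the stated formula.

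The step I expect to need the most care is the one-variable identity: the ``polynomial value'' of $\binom{u-v+s}{s}$ at small nonnegative $u$ is \emph{not} the ordinary binomial coefficient, and one must check that the discrepancy assembles into $Q(t)(1-t)^{s+1}$ so that it vanishes under all derivatives of order $\le s$ at $t=1$. I would also flag that the apparently shorter route --- introducing $\widetilde{N}=\sum_\alpha c_\alpha\prod_i(1-t_i)^{s_i-\alpha_i}$ (the numerator of $\sum_{\underline{u}\ge\underline{0}}P(\underline{u},R)\underline{t}^{\underline{u}}$) and trying to deduce $N\equiv\widetilde{N}$ modulo $\bigl((1-t_1)^{s_1+1},\dots,(1-t_r)^{s_r+1}\bigr)$ from $N-\widetilde{N}=\bigl(HS(R,\underline{t})-\sum_{\underline{u}\ge\underline{0}}P(\underline{u},R)\underline{t}^{\underline{u}}\bigr)\prod_i(1-t_i)^{s_i+1}$ --- does \emph{not} go through naively, because that difference of power series has poles along the hyperplanes $t_i=1$ (equivalently, each $1-t_i$ is a unit of the power series ring), so the combinatorial argument above is the one I would carry out.
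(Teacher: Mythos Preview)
Your argument is correct and follows a genuinely different route from the paper's. The paper works at the level of rational functions: it expands $N$ in a (finite) Taylor series about $(1,\dots,1)$, subtracts from $HS(R,\underline{t})$ the explicit sum
\[
E(\underline{t})=\sum_{\underline{0}\le\alpha\le\underline{s}}\frac{(-1)^{|\alpha|}N^{(\alpha)}}{\alpha_1!\cdots\alpha_r!}\prod_{i=1}^r(1-t_i)^{-(s_i+1-\alpha_i)},
\]
and argues that the difference contributes nothing to the coefficient of $\underline{t}^{\underline{u}}$ for large $\underline{u}$ (because every Taylor term in the remainder carries a factor $(t_j-1)^{s_j+1}$ for some $j$, so after dividing by the denominator that term is polynomial in $t_j$); expanding $E$ then gives the formula. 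You instead expand $N$ monomial by monomial, extract the Hilbert function as a finite sum $\sum_{\underline{v}} a_{\underline{v}}\prod_i\binom{u_i-v_i+s_i}{s_i}$, and reduce the coefficient extraction to the single-variable identity $\binom{u-v+s}{s}=\sum_{\beta=0}^{s}(-1)^{s-\beta}\binom{v}{s-\beta}\binom{u+\beta}{\beta}$, which you prove cleanly via generating functions. The paper's approach is shorter once the Taylor idea is in hand; yours is entirely combinatorial, keeps every step a finite identity, and avoids having to analyze which pieces of the Hilbert series survive as $\underline{u}\to\infty$. Your closing caveat is also well placed: the tempting shortcut of comparing numerators directly fails precisely because $1-t_i$ is a unit in the power-series ring, so the congruence you would need cannot be read off from the equality of formal power series.
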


\begin{proof}
	We write 
	\[N^{(\underline{\alpha})} = \frac{\partial^{|\alpha|} N(t_1,\ldots, t_r)}{\partial t_1^{\alpha_1} \cdots \partial t_r^{\alpha_r}}\scalebox{2.5}{\ensuremath \mid}_{(t_1, \ldots, t_r)=\underline{1}}.\]Then \[HS(R,\underline{t})-\sum_{\underline{\alpha}=\underline{0}}^{\underline{s}}\frac{N^{(\underline{\alpha})}(-1)^{|\alpha|}}{\alpha_1!\cdots \alpha_r!(1-t_1)^{s_1+1-\alpha_1}\cdots(1-t_r)^{s_r+1-\alpha_r}}=\frac{D(t_1,\ldots, t_r)}{(1-t_1)^{s_1+1} \cdots (1-t_r)^{s_r+1}}\] where \[D(t_1,\ldots, t_r)=N(t_1,\ldots, t_r)-\sum_{\underline{\alpha}=\underline{0}}^{\underline{s}}\frac{N^{\underline{\alpha}}}{\alpha_1!\cdots \alpha_r!}(t_1-1)^{\alpha_1}\cdots(t_r-1)^{\alpha_r}.\] Thus $D(t_1,\ldots, t_r)$ is the remainder of the Taylor series of $N(t_1,\ldots, t_r)$ about the point $(1, \ldots, 1)\in \N^r$ having terms of degree $\geq s_i+1$ in $t_i-1$, for all $1 \leq i \leq r$. So $D(t_1,\ldots, t_r)$ is divisible by $(1-t_1)^{s_1+1} \cdots (1-t_r)^{s_r+1}$. Therefore, for all large $u_i$'s, $\lambda(R_{\underline{u}})$ is the coefficient of $\underline{t}^{\underline{u}}$ in the power series expansion of 	
	\begin{equation}\label{eq1}
	E(t_1,\ldots,t_r) = \sum_{\underline{\alpha}=\underline{0}}^{\underline{s}} \frac{N^{(\underline{\alpha})}(-1)^{|\underline{\alpha}|}}{\alpha_1!\cdots \alpha_r!(1-t_1)^{s_1+1-\alpha_1}\cdots(1-t_r)^{s_r+1-\alpha_r}}.
	\end{equation} 
	As the coefficient of $\underline{t}^{\underline{u}}$ in $E(t_1,\ldots, t_r)$ is given by a polynomial for all $\underline{u}$ so	
	\begin{equation}\label{eq2}
	E(t_1,\ldots, t_r)=\sum_{\underline{u} \in \N^r} P(\underline{u}; R)\underline{t}^{\underline{u}}.
	\end{equation}
	
	Here we are using the fact that two polynomials in $\Z[y_1, \ldots, y_r]$ coinciding at $\underline{u}$, for all $u_i$ large, are equal. Now expanding the rational function in \eqref{eq1}, we get
	\begin{equation}\label{eq3}
	\begin{split}
	E(t_1,\ldots, t_r)&=\sum_{\underline{\alpha}=\underline{0}}^{\underline{s}}\frac{N^{(\underline{\alpha})} (-1)^{|\underline{\alpha}|}}{\alpha_1!\cdots \alpha_r!}\sum_{\underline{u} \in \N^r}\binom{s_1-\alpha_1+u_1}{u_1}\cdots \binom{s_r-\alpha_r+u_r}{u_r}\underline{t}^{\underline{u}}\\
	&=\sum_{\underline{u} \in \N^r}\left(\sum_{\underline{\alpha}=\underline{0}}^{\underline{s}}\frac{N^{(\underline{\alpha})} (-1)^{|\underline{\alpha}|}}{\alpha_1!\cdots \alpha_r!}\right)\binom{s_1-\alpha_1+u_1}{u_1}\cdots \binom{s_r-\alpha_r+u_r}{u_r}\underline{t}^{\underline{u}}.
	\end{split}
	\end{equation}
	Comparing \eqref{eq3} with \eqref{eq2}, we get the result.
\end{proof}

Note that 
\[\binom{u_i+\alpha_i}{\alpha_i}= \frac{1}{\alpha_i!}u_i^{\alpha_i}+\text{ lower degree terms}.\] 
So if we write $P(\underline{u})$ as in \eqref{eq11}, then $c_{\alpha}=e_{\alpha}$ for all $\alpha \in \N^{r+1}$ with $|\alpha|=d-1$, see Theorem \ref{deg-mixedmul}.
Therefore, Theorem \ref{thm1} gives an expression for $e_{\alpha}.$

\begin{remark}
Let $I'_0, I'_1,\ldots,I'_r$ denote the images of ideals $I_0, I_1,\ldots,I_r$ in $A':=A/(0 : I^\infty),$ where $I = I_1 \cdots I_r.$ Following the notations in \cite{Trung-Verma}, we set $R:=R(I_0 |I_1,\ldots, I_r)=\R(I_0, \ldots, I_r)/(I_0)$ and put $R':= R(I'_0 |I'_1,\ldots, I'_r).$ Notice $R=\gr_A(I_0,I_1,\ldots, I_r; I_0)$, defined in \eqref{gr}. Then for $\underline{u}$ large, $P_R(\underline{u}) = P_{R'}(\underline{u})$ (see \cite[Theorem 1.2]{Trung-Verma} for details). Therefore, in case $\grade I_i=0$ for some $i$, the user needs to work in the quotient ring $A'$ and input the images of the ideals in the quotient ring.
\end{remark}

\noindent
\begin{algorithm*}
The algorithm for the function {\rm \texttt{mixedMultiplicity}} uses the above ideas to calculate the mixed multiplicity. Let $I_0,I_1,\ldots,I_r$ be a set of ideals of a Noetherian ring $R$ of dimension $d \geq 1$, where $I_0$ is primary to the maximal ideal and $\grade(I_i)>0$ for all $i$; $\underline{a}=(a_0,a_1,\ldots,a_r) \in \N^{r+1}$ with $|\underline{a}|=d-1.$ \\
{\bf Input}: The sequence $W=((I_0,I_1,\ldots,I_r),(a_0,a_1,\ldots,a_r))$.
\begin{enumerate}[1.]
	\item Compute the defining ideal of the multi-Rees algebra using the function {\rm \texttt{multiReesIdeal}} and use it to find the Hilbert series of $R(I_0 \mid I_1,\ldots,I_r).$
	
	\item Extract the powers of $(1-T_i)$ in the denominator of the Hilbert series.
	
	\item Calculate $e_{\underline{a}}$ using the formula given in Theorem \ref{thm1}.
\end{enumerate}
{\bf Output}: The mixed multiplicity $e_{\underline{a}}(I_0 \mid I_1,\ldots,I_r)$.
\end{algorithm*} 

\begin{example}
	{\small
		\begin{multicols}{2}
			\begin{verbatim}
				i1 : R = QQ[w,x,y,z];
				i2 : I = ideal(x^2-y*w, x^3-z*w^2);
				i3 : m = ideal vars R;
				i4 : mixedMultiplicity((m,I),(3,0))
				o4 = 1
				i5 : mixedMultiplicity((m,I),(2,1))
				o5 = 2
			\end{verbatim}
		\end{multicols}
	}
\end{example}

When some ideal has grade zero, the following example explains how to compute the mixed
multiplicity.

\begin{example}
	Let $R=\mathbb{Q}[w,x,y,z]/(wx,yz)$, $\m=(w,x,y,z)$, and $I=(w,y).$ Notice that $\grade I=0$, since $I \in \Ass R$.
	{ \small
		\begin{multicols}{2}
			\begin{verbatim}
			i1 : R = QQ[w,x,y,z]/ideal(w*x, y*z);
			i2 : I = ideal(w,y);
			i3 : m = ideal vars S;
			i4 : L = saturate(sub(ideal 0, R), I);
			i5 : T = S/L;
			i6 : J = substitute(I, T);
			i7 : n = substitute(m, T); 
			i8 : dim T
			o8 = 2
			i9 : mixedMultiplicity ((n,J),(1,0))
			o9 = 3
			\end{verbatim}
		\end{multicols}
	}	
\end{example}

To calculate mixed multiplicities, the function \texttt{mixedMultiplicity} computes the Hilbert polynomial of the graded ring $\bigoplus I_0^{u_0}I_1^{u_1} \cdots I_r^{u_r}/I_0^{u_0+1}I_1^{u_1} \cdots I_r^{u_r}$ . In particular, if $I_1,\ldots,I_r$ are also $\m$-primary ideals, then $e_{(a_0,a_1,\ldots,a_r)}(I_0 \mid I_1,\ldots,I_r) = e(I_0^{[a_0+1]}, I_1^{[a_1]}, \ldots, I_r^{[a_r]})$ (see \cite[Definition 17.4.3]{Huneke-Swanson}). Therefore, to compute the $(a_0+1, a_1,\ldots, a_r)$-th
mixed multiplicity of $I_0,I_1,\ldots,I_r$, one needs to enter the sequence $(a_0,a_1,\ldots,a_r)$ in the function. The same is illustrated in the following example.

\begin{example}
	Let $R=\mathbb{Q}[w,x,y,z]$ and $\m$ be the maximal homogeneous ideal of $R.$ Let $I = (x^2-yw, x^3-zw^2)$ and $J = \m^4 + I.$ We calculate the mixed multiplicities $e(\m) = e(\m^{[4]}, J^{[0]})$ and $e(\m^{[3]},J^{[1]}).$	
	{\small\begin{multicols}{2}
			\begin{verbatim}
				i1 : R = QQ[w,x,y,z];
				i2 : m = ideal vars R;
				i3 : I = ideal(x^2-y*w, x^3-z*w^2);
				i4 : J = m^4 + I;
				i5 : mixedMultiplicity ((m,J),(3,0))
				o5 = 1
				i6 : mixedMultiplicity ((m,J),(2,1))
				o6 = 2
			\end{verbatim}
	\end{multicols}}
\end{example}

\section{Writing the function \texttt{mixedMultiplicity} in Macaulay2}
In this section, we illustrate a detailed overview of the function \texttt{mixedMultiplicity} based on the algorithm discussed in the previous section. 
One can refer to the package \href{https://faculty.math.illinois.edu/Macaulay2/doc/Macaulay2-1.19.1/share/doc/Macaulay2/MixedMultiplicity/html/index.html}{\texttt{MixedMultiplicity}} for the Macaulay2 code of this function. 

We start with two sequences \texttt{W1}=$(I_0, I_1, \ldots, I_r)$ and \texttt{W2}=$(a_0, a_1,\ldots,a_r)$. Then the command \texttt{W1\#i} returns the $i$-th element of \texttt{W1}, that is, $I_i$. All ideals are expected to be in the same ring. To capture the ambient of these ideals we use \texttt{ring W1\#0}. We now check that the ambient ring has dimension at least one, the two sequences \texttt{W1}, \texttt{W2} have same length, \texttt{W1} consists of ideals of positive grade with same ambient ring, and \texttt{W2} is a sequence of natural numbers. Now we set `$n$' to be the cardinality of \texttt{W1}, that is, $n=r+1$. The defining ideal of the multi-Rees algebra $\R(I_0, \ldots, I_r)$ is computed using the function \texttt{multiReesIdeal} and named as \texttt{L1}. For $i=1, \ldots, n$, \texttt{V(i)} gives us the unit vector $(0,..,0,1,0,..,0) \in \N^n$, where $1$ is at the $i^{th}$ place. Next, \texttt{Q} is a list of \texttt{V(i)}'s where \texttt{V(i)} occurs as many times as the number of generators of \texttt{W1\#i} (i.e., $I_i$) for $i=0, \ldots,r$. We use \texttt{compress gens} to remove zero entries, if any, from the input generating set of ideals. As we use \texttt{compress gens} in the function \texttt{multiReesIdeal} too so it is compatible. Since \texttt{gens} gives us the generators of an ideal as a matrix, we 
could also use \texttt{\#(first entries compress gens W1\#i)}. The command \texttt{first entries} extracts the first row of a matrix in the form of a list. Other possible commands are \texttt{numgens trim W1\#i} and \texttt{rank source mingens W1\#i}. 

Then we define a ring \texttt{S}, which is the same ring as the ambient ring of \texttt{L1}, but is now multigraded with degrees given by \texttt{Q}.
By \texttt{sub(W1\#0, S)}, we view the ideal \texttt{W1\#0} as an ideal in \texttt{S} and call it \texttt{p}. Next, we view \texttt{L1} as an ideal in \texttt{S/p} and call it \texttt{L2}. Recall that \texttt{W1\#0}$=I_0$ is primary to the maximal ideal in its ambient ring. \texttt{hilbertSeries} computes the Hilbert series of \texttt{L2} and reduce it using the command \texttt{reduceHilbert}. Thus we get an expression \texttt{H} as in \eqref{hilSer}. To use Theorem \ref{thm1}, we extract the numerator \texttt{B0} and denominator \texttt{B1} of \texttt{H}. Factoring out \texttt{B1}, we capture the powers. For instance, if \texttt{B1} $ = T_0T_1^2-2T_0T_1-T_1^2+T_0+2T_1-1$, then \texttt{facB1 = factor B1} $ = (T_1-1)^2(T_0-1)$, and \texttt{facB1\#0} $ = (T_1-1)^2.$ The command \texttt{facB1\#0} executes because \texttt{factor} outputs an expression of the class \texttt{Product}, which is a type of a basic list. Since \texttt{Power} is also a type of a basic list, we get \texttt{facB1\#0\#1}= $2.$ We now extract $a_i$'s with the command \texttt{W2\#i} and verify that $a_i \leq \mbox{ power of } (1-T_i).$ At this stage the function has the value of all the variables in the expression given in Theorem \ref{thm1}. So we just compute the value.

\section{Mixed volume of lattice polytopes}

Given a collection of lattice polytopes in $\mathbb{R}^n$, Trung and the last author proved that their mixed volume is equal to a mixed multiplicity of a set of homogeneous ideals, see Proposition \ref{vol-mul}. We use this result to construct an algorithm which calculates the mixed volume of a collection of lattice polytopes. We also give an algorithm which outputs the homogeneous ideal corresponding to the vertices of a lattice polytope.

Let $Q$ be a lattice polytope in $\mathbb{R}^n$ with the set of vertices $\{p_1,\ldots,p_r\} \subseteq \N^n$. We first compute the corresponding homogeneous ideal $I$ in the polynomial ring $R=k[x_1,\ldots,x_{n+1}]$ such that $Q$ is the convex hull of the lattice points of the dehomogenization of a monomial generating set of $I$ in $k[x_1,\ldots,x_n]$. For this purpose, we write a function \texttt{homIdealPolytope} which requires as an input the list of vertices of $Q$.

\begin{algorithm*}  
Suppose that $p_1,\ldots,p_r \subseteq \N^n$ are vertices of a polytope $Q$.
\\
{\bf Input}: The list $W=\{p_1,p_2,\ldots, p_r\}$.
\begin{enumerate}[1.]
	\item Define a polynomial ring $R$ in $(n+1)$-variables over $\mathbb{Q}$, where the vertices are in $\mathbb{R}^n$. 
	
	\item For each vertex $p_i$, associate a monomial in the first $n$-variables, where the $j$-th component of $p_i$ is the power of the $j$-th variable and add them up.
	
	\item Add the monomials and homogenize it with respect to the last variable. 
	
	\item Define an ideal generated by the terms of the homogeneous element.
\end{enumerate} 
{\bf Output}:  The corresponding homogeneous ideal in $\mathbb{Q}[x_1,\ldots, x_{n+1}]$.
\end{algorithm*} 

We now write a function \texttt{mMixedVolume} to calculate the mixed volume of a collection of $n$ lattice polytopes in $\mathbb{R}^n.$ Let $Q_1,\ldots,Q_n$ be an arbitrary collection of lattice polytopes in $\mathbb{R}^n$. Let $I_i$ be the homogeneous ideal of $R=k[x_1,\ldots,x_{n+1}]$ associated to $Q_i$ for $i=1,\ldots, n$.
The function \texttt{mMixedVolume} takes the list $\{I_1,\ldots, I_n\}$ as an input and produces the mixed volume of $Q_1, \ldots, Q_n$ as an output. The function can also take the list of lists of vertices of the polytope as an input to compute their mixed volume. Since calculating the mixed volume is same as calculating a mixed multiplicity, the algorithm of the function \texttt{mMixedVolume} is similar to that of \texttt{mixedMultiplicity}.

\begin{example}
	Let $Q_1, Q_2, Q_3$ be the same tetrahedron with vertices $(1,1,0),(2,1,0),(1,3,0)$ and $(1,1,3)$. Using the formula, the mixed volume can be calculated as follows: 
	\begin{align*} 
	MV_3(Q_1,Q_2,Q_3)
	=& V_3(Q_1+Q_2+Q_3) - V_3(Q_1+Q_2) - V_3(Q_2+Q_3) - V_3(Q_3+Q_1) \\
	&\qquad + V_3(Q_1) + V_3(Q_2) + V_3(Q_3),
	\end{align*}
	where $V_3$ denotes the $3$-dimensional Euclidean volume. One can check that $V_3(Q_i)=1$, for all $i=1,2,3$; $V_3(Q_i+Q_j)=8$, for all $i \neq j$ and $V_3(Q_1+Q_2+Q_3)=27$. Hence $MV_3(Q_1,Q_2,Q_3)=27-(3 \times 8)+(3 \times 1)=6.$ The following Macaulay2 working verifies this calculation.
	{\small \begin{verbatim}
		i1 : A = {(1,1,0),(2,1,0),(1,3,0),(1,1,3)};
		i2 : mMixedVolume{A,A,A}
		o2 = 6
	\end{verbatim}
}
\end{example}
%
%
%

\begin{example}
	Consider a generic vertical parabola $f_1$ and a generic horizontal parabola $f_2$ given by 
	\begin{align*}
		f_1&=c_{1,(0,0)}+c_{1,(1,0)}x+c_{1,(2,0)}x^2+c_{1,(0,1)}y,\\
		f_2&=c_{2,(0,0)}+c_{2,(1,0)}x+c_{2,(0,1)}y+c_{2,(0,2)}y^2.
	\end{align*}
	We compute the mixed volume of the Newton polytopes associated to the supports $Q_1, Q_2$ of $f_1$, and $f_2$ respectively.
	{ \small
		\begin{multicols}{2}
			\begin{verbatim}
				i1 : Q1 = {(0,0),(1,0),(2,0),(0,1)};
				i2 : Q2 = {(0,0),(1,0),(0,1),(0,2)};
				i3 : mMixedVolume {Q1, Q2}
				o3 = 4
			\end{verbatim}
		\end{multicols}	}
	This computation verifies \cite[Example 1.2]{Averkov2021} and also gives a bound for the number of common zeros (in the complex torus) of $f_1$ and $f_2$ (see Bernstein's theorem in subsection \ref{MV}). 
\end{example}


\begin{example}
	We compute and verify the mixed volumes of the two pairs of convex polytopes, which appeared in \cite[Example 1.1]{Chen} and \cite[Section 2]{Chen} (take $\lambda=1$), respectively. 
		\begin{verbatim}
			i1 : Q1 = {(1, 1),(3, 0),(4, 0),(4, 1),(3, 3),(1, 4),(0, 4),(0, 3)};
		\end{verbatim}
	\begin{minipage}{.725 \columnwidth}
	\begin{verbatim}
			i2 : Q2 = {(0, 1),(0, 0),(3, 0),(4, 1),(4, 4),(3, 4)};
			i3 : mMixedVolume {Q1, Q2}
			o3 = 32
			i4 : P1 = {(0, 0),(0, 2),(2, 0),(2, 2)};
			i5 : P2 = {(0, 0),(1, 2),(2, 1)};
			i6 : mMixedVolume {P1, P2}
			o6 = 8
	\end{verbatim}
	\end{minipage}
	\begin{minipage}{.225 \columnwidth}
		\begin{center}
			\begin{tikzpicture}[scale=0.5]
				\draw[->, thick] (0,0)--(5.75,0) node[right]{$x$};
				\draw[->, thick] (0,0)--(0,5.75) node[left]{$y$};
				\draw (0,0) grid (5,5);
				\draw[-,ultra thick,fill=red,fill opacity=0.5](1,1)-- (3,0)-- (4,0)-- (4,1)-- (3,3)-- (1,4)to node[above]{$Q1$} (0,4)-- (0,3)-- (1,1);
				\draw[-,ultra thick,fill=blue,fill opacity=0.5](0,1)-- (0,0)-- (3,0)-- (4,1)-- (4,4)to node[above]{$Q2$} (3,4)-- (0,1);
			\end{tikzpicture}
		\end{center}
	\end{minipage}
\end{example}

\section{Sectional Milnor number}

In this section, we give an algorithm to compute the sectional Milnor numbers. We use Teissier's observation of identifying the sectional Milnor numbers with mixed multiplicities to achieve this task. Teissier (\cite{teissier1973}) conjectured that invariance of Milnor number implies invariance of the sectional Milnor numbers. The conjecture was disproved by Jo\"el Brian\c{c}on and Jean-Paul Speder. We verify their example using our algorithm and also by explicitly calculating the mixed multiplicities.

Let $R=\mathbb{C}[x_1,\ldots,x_n]$ be a polynomial ring in $n$ variables, $\mathfrak{m}$ be the maximal graded ideal and $f \in R$ be any polynomial with an isolated singularity at the origin. Using Theorem \ref{thm1}, one can now calculate the mixed multiplicities of $\m$ and $J(f)$. We use the ideas in the previous section to write a function \texttt{secMilnorNumbers} for computing the first $n-1$ sectional Milnor numbers. 
\noindent
With a polynomial $f$ given as an input, the algorithm calculates the Jacobian ideal of $f$ and then using the function \texttt{multiReesIdeal}, it finds the defining ideal of $\R(\m, J(f))$. This helps to compute the Hilbert series of the special fiber $\f(\m,J(f))= \R(\m, J(f)) \otimes_R R/\mathfrak{m}$. Using the formula given in Theorem \ref{thm1}, it then calculates the mixed multiplicities. Note that the $n^{th}$-sectional Milnor number is the Milnor number of the hypersurface $f$ at the origin. So under the extra assumption that the ideal $J(f)$ is $\m$-primary, we have $\mu^{(n)}(X, 0)=\dim_{\mathbb{C}} R/J(f)$. Together, the function \texttt{secMilnorNumbers} outputs $\left(\mu^{(0)}(X, 0), \mu^{(1)}(X, 0),\ldots, \mu^{(n)}(X, 0)\right)$.

\begin{example}
Let $R=\mathbb{Q}[x,y,z]$ and $f=x^4+y^4+z^4$ be a polynomial in the ring. 
	{\small
			\begin{verbatim}
			i1 : R = QQ[x,y,z];
			i2 : f = x^4+y^4+z^4;
			i3 : secMilnorNumbers f
			o3 = HashTable{0 => 1, 1 => 3, 2 => 9, 3 => 27}
			\end{verbatim}
	}
\end{example}

\begin{remarks}
	
\noindent	
\begin{enumerate}
\item We set $I:=(f, J(f))$. If $f$ has an isolated singularity at the origin, then by definition $IR_{\m}$ is an $\m R_\m$-primary ideal. In view of \cite[Theorem 7.1.5, Proposition 1.6.2]{Huneke-Swanson}, $f/1\in \overline{J(f)R_\m}$ and hence $J(f)R_\m \subset I R_\m$ is a reduction. Thus 
\[IR_\m \subset \overline{IR_\m}=\overline{J(f)R_\m} \subseteq \sqrt{J(f)R_\m} \subset \sqrt{IR_\m},\]
see \cite[Remark 1.1.3 (3)]{Huneke-Swanson}. So $\sqrt{J(f)R_\m} = \sqrt{IR_\m}=\m R_\m$. Therefore, $J(f)R_\m$ is $\m R_\m$-primary. The example in $(2)$ says that $J(f)$ need not be $\m$-primary even when $I$ is $\m$-primary. However, if $f$ is homogeneous, then by the \emph{classical Euler formula} $(\deg f)\cdot f=$ $\sum_{i=1}^n x_i\cdot \partial f/\partial x_i \in J(f)$. Hence $J(f)=I$. Consequently, $J(f)$ is $\m$-primary if $I$ is so.	
\\[1pt]

\item If $J(f)$ is not $\m$-primary, then the Milnor number is $\dim_{\mathbb{C}} \frac{R_\m}{J(f)R_\m}$, the multiplicity of $R/J(f)$ at $\m$. It can be computed using Macaulay2 by means of the $R$-module isomorphism 
\begin{equation}\label{iso_sat}
\frac{R_\m}{J(f)R_\m} \overset{\overline{\varphi}}{\cong} \frac{R}{\varphi^{-1}\big(J(f)R_\m\big)}= \frac{R}{(J(f):(J(f):\m^\infty))}
\end{equation}
induced from the natural inclusion $\varphi: R \to R_\m$, see \cite[Page 61]{eisenbud2001comp}. Since $\varphi^{-1}\big(J(f)R_\m\big)R_\m=J(f)R_\m$ by \cite[Proposition 1.17]{Atiyah-MacDonald}, $\overline{\varphi}_\m$ is an isomorphism. So from \cite[Proposition 3.9]{Atiyah-MacDonald} it follows that $\overline{\varphi}$ is an isomorphism. It is now enough to show 
\[\varphi^{-1}\left(J(f)R_\m\right)=(J(f):(J(f):\m^\infty)).\]
Suppose that $J(f)=\left(\cap_{i=1}^r Q_i\right) \cap Q$ is a minimal primary decomposition, where $\sqrt{Q}=\m$ and $\sqrt{Q_i}=P_i \neq \m$ for $i=1, \ldots, r$. Since $J(f)R_\m$ is $\m R_\m$-primary, $\m$ is a minimal prime of $J(f)$ and $P_i\nsubseteq \m$ for $i=1, \ldots, r$. By \cite[Theorem 4.10]{Atiyah-MacDonald}, the $\m$-primary component of $J(f)$ is unique.
We now show that 
\[\varphi^{-1}\left(J(f)R_\m\right)=Q=(J(f):(J(f):\m^\infty)).\]
By \cite[Proposition 4.9]{Atiyah-MacDonald}, $J(f)R_\m=QR_\m$ and $\varphi^{-1}(J(f)R_\m)=Q$. On the other hand, $\widetilde{J(f)}:=(J(f):\m^\infty)=\cap_{i=1}^r Q_i$, the intersection of all non $\m$-primary components of $J(f)$. Due to \cite[Proposition 1.11]{Atiyah-MacDonald}, we can pick $x \in \left(\cap_{i=1}^r Q_i\right) \backslash \m$. Then 
\[(J(f): \widetilde{J(f)}) \subseteq (J(f):x)=\left(\cap_{i=1}^r(Q_i:x)\right) \cap (Q:x)=Q,\] 
by \cite[Lemma 4.4]{Atiyah-MacDonald}. Besides, as $\widetilde{J(f)}, Q$ are co-maximal so $Q \cdot \widetilde{J(f)}=Q \cap \widetilde{J(f)}=J(f)$. Therefore, $Q \subseteq (J(f): \widetilde{J(f)})$ and hence $(J(f): \widetilde{J(f)})=Q$. Note that the ideal $J(f)$ is $\m$-primary if and only if $J(f):\m^\infty=R$.
\\[1pt]

\item If $J(f)$ is not $\m$-primary, then one can use the isomorphism in \eqref{iso_sat} to compute the Milnor number of $f$. However, computing `saturation' is time-consuming. Besides, the Macaulay2 commands for working in a local ring are not fully developed. So we decided not to use either of these in the function \texttt{secMilnorNumbers} and restricted ourselves to the case when $J(f)$ is $\m$-primary. This assumption forces $\mathrm{Sing}(f)=\{\mbox{origin}\}$, as the ideal $(f, J(f))$ is also $\m$-primary. Nevertheless, users can use the succeeding commands to compute the Milnor number of $f$ directly, whether $J(f)$ is $\m$-primary or not. We take $f=x^2+y^2+z^2+xyz \in \Q[x,y,z]$. By \cite[Example 2.4]{liu2018}, the Milnor number of $f$ is $1$. Notice $(f, J(f))$ is an $\m$-primary ideal. 
	{\small
	\begin{multicols}{2}
	\begin{verbatim}
	i1 : R = QQ[x,y,z];
	i2 : f = x^2+y^2+z^2+x*y*z;
	i3 : J = ideal jacobian f;
	i4 : m= ideal vars R;
	i5 : J1= saturate(J, m);
	i6 : J1 == sub(ideal 1, R)
	o6 : false
	i7 : I= ideal f + J;
	i8 : saturate(I, m) == sub(ideal 1, R)
	o8 : true
	i9 : J2 = J : J1;
	i10 : degree J2
	o10 : 1
	-- the Milnor number of f
	\end{verbatim}
	\end{multicols}
}
\end{enumerate}
\end{remarks}

Let $h$ be a non-constant homogeneous polynomial in $\mathbb{C}[z_0,\ldots,z_n]$ and let $\Delta_h \subseteq \mathbb{R}^n$ be the convex hull of exponents of dehomogenized monomials  in $\mathbb{C}[z_1,\ldots,z_n]$ appearing in one of the partial derivatives of $h.$ Note that such $h$ need not have an isolated singularity at the origin. We set $\m=(z_0,\ldots,z_n)$. In \cite{June12}, June Huh extended the notion of sectional Milnor numbers and defined a sequence $\{\mu^i(h)\}_{i=0}^n$ by setting $\mu^i(h)=e_i(\m, J(h))$ for $i=0, \ldots, n$. We want to remark that once can compute $\mu^i(h)$ using the command \texttt{mixedMultiplicity}. 
Huh compared $\mu^i(h)$ with the mixed volume of standard $n$-dimensional simplex $\Delta$ and $\Delta_h$ in $\mathbb{R}^n.$ 
\begin{theorem}[{\cite[Theorem 15]{June12}}] \label{June}
Suppose that $h$ is a non-constant homogeneous polynomial in $\mathbb{C}[z_0,\ldots,z_n]$. For $i = 0,\ldots, n$, we have
	\[ \mu^{(i)}(h) \leq MV_n(\underbrace{\Delta,\ldots,\Delta}_{n-i},\underbrace{\Delta_h,\ldots,\Delta_h}_{i}).\]
\end{theorem}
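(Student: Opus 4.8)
The plan is to realize both sides as mixed multiplicities in $R=\C[z_0,\ldots,z_n]$ and to compare them through an inclusion of ideals generated in a single degree. Set $d=\deg h$ and $J(h)=(\partial h/\partial z_0,\ldots,\partial h/\partial z_n)\subseteq R$, so that $\mu^{(i)}(h)=e_i(\m\mid J(h))$ by definition. First I would attach to $\Delta_h$ the monomial ideal $N=\langle M\rangle$, where $M$ is the set of degree-$(d-1)$ monomials $z_0^{\,d-1-|\beta|}z_1^{\beta_1}\cdots z_n^{\beta_n}$ with $\beta=(\beta_1,\ldots,\beta_n)\in\Delta_h\cap\Z^n$; since $\Delta_h$ is a lattice polytope it is the convex hull of $\Delta_h\cap\Z^n$, so $M$ is a set of monomials of one common degree whose dehomogenized exponents have convex hull $\Delta_h$, and likewise $\{z_0,\ldots,z_n\}$ is such a set for the standard simplex $\Delta$, with associated ideal $\m$. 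Proposition \ref{vol-mul} then gives $MV_n(\Delta,\ldots,\Delta,\Delta_h,\ldots,\Delta_h)=e_{(0,1,\ldots,1)}(\m\mid \m,\ldots,\m,N,\ldots,N)=e_i(\m\mid N)$, the last equality obtained by grouping the repeated ideals in the defining multigraded Hilbert polynomial $\ell(\m^{u_0}\prod_j I_j^{u_j}/\m^{u_0+1}\prod_j I_j^{u_j})$ (an ideal repeated $t$ times contributes the exponent $t$ to that slot). So the theorem reduces to the single inequality $e_i(\m\mid J(h))\le e_i(\m\mid N)$. The link between the two ideals is that every monomial occurring in a partial $\partial h/\partial z_j$ is homogeneous of degree $d-1$ with dehomogenized exponent in $\Delta_h$, hence lies in $M$; therefore $J(h)\subseteq N$, and both $J(h)$ and $N$ are generated by forms of the common degree $d-1$.

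The core of the proof is then a monotonicity statement: if $I\subseteq I'$ are ideals of $R$ generated by forms of one and the same degree $e$, then $e_j(\m\mid I)\le e_j(\m\mid I')$ for every $j$. I would prove it as follows. Because $I$ is generated in degree $e$, the module $\m^{u_0}I^{u_1}/\m^{u_0+1}I^{u_1}$ is concentrated in degree $u_0+eu_1$ and equals $R_{u_0}\cdot (I^{u_1})_{eu_1}$, where $(I^{u_1})_{eu_1}$ is spanned by products of $u_1$ generators of $I$; since $I\subseteq I'$ forces $(I^{u_1})_{eu_1}\subseteq (I'^{u_1})_{eu_1}$, one gets pointwise domination $\ell(\m^{u_0}I^{u_1}/\m^{u_0+1}I^{u_1})\le\ell(\m^{u_0}I'^{u_1}/\m^{u_0+1}I'^{u_1})$ of the two bigraded Hilbert functions. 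To extract the single coefficient $e_j$ rather than the whole polynomial, reduce modulo $n-j$ general linear forms $\theta_1,\ldots,\theta_{n-j}\in\m$ (legitimate since $\C$ is infinite): in $\bar R=R/(\theta_1,\ldots,\theta_{n-j})$, of dimension $j+1$, the standard reduction of mixed multiplicities modulo general elements of $\m$ gives $e_j(\m\mid I)=e_{(0,j)}(\bar\m\mid \bar I)$, which is $j!$ times the coefficient of $m^j$ in the Hilbert polynomial $m\mapsto\ell(\bar I^m/\bar\m\bar I^m)$ of the fibre cone of $\bar I$ (and is $0$ when that polynomial has degree $<j$). Reduction preserves the pointwise inequality, and a one-variable polynomial pointwise dominated by another of degree $\le j$ has its degree-$j$ coefficient dominated; this yields $e_j(\m\mid I)\le e_j(\m\mid I')$. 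Taking $I=J(h)\subseteq N=I'$ with $j=i$ and combining with the first paragraph gives $\mu^{(i)}(h)=e_i(\m\mid J(h))\le e_i(\m\mid N)=MV_n(\Delta,\ldots,\Delta,\Delta_h,\ldots,\Delta_h)$.

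The step I expect to be the main obstacle is precisely this monotonicity, for two reasons. First, mixed multiplicities are \emph{not} monotone under inclusion in general — for instance $\m^2\subsetneq\m$ has $e_i(\m\mid\m^2)=2^i>1=e_i(\m\mid\m)$ for $i\ge 1$ — so the hypothesis that $J(h)$ and $N$ are generated in the same degree must be used in an essential way, via the identity $\m^{u_0}I^{u_1}/\m^{u_0+1}I^{u_1}=R_{u_0}\cdot(I^{u_1})_{eu_1}$, just to obtain the correct direction. Second, pointwise domination of the full bigraded Hilbert polynomials does not by itself force each top coefficient to be dominated, so the reduction modulo general linear forms — which isolates the one coefficient $e_i$ as the leading term of a fibre-cone Hilbert polynomial — is the technical device that actually closes the argument. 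A minor but necessary bookkeeping point is the passage used in the first paragraph between the mixed multiplicity $e_{(0,1,\ldots,1)}(\m\mid \m,\ldots,\m,N,\ldots,N)$ delivered by Proposition \ref{vol-mul} and the single mixed multiplicity $e_i(\m\mid N)$; this is immediate from the Hilbert-polynomial definition once equal ideals are grouped, but should be recorded explicitly.
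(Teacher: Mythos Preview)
The paper does not prove this theorem; it is quoted from \cite{June12} and then only illustrated by Macaulay2 computations, so there is no proof in the paper to compare your argument against.

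That said, your strategy is sound and genuinely different from Huh's. The reduction of both sides to mixed multiplicities $e_i(\m\mid J(h))$ and $e_i(\m\mid N)$ via Proposition~\ref{vol-mul}, together with the grouping of repeated ideals, is correct, as is the key inclusion $J(h)\subseteq N$ with both ideals generated in the single degree $d-1$, which yields the pointwise inequality of bigraded Hilbert functions. The one step that needs tightening is the extraction of the $(0,i)$-coefficient after reducing modulo $n-i$ general linear forms. Your identification of $e_{(0,i)}(\bar\m\mid\bar I)$ with the leading coefficient of the fibre-cone Hilbert polynomial is delicate: the bigraded Hilbert function need not agree with its polynomial $\bar P(u_0,u_1)$ along $u_0=0$, so $\bar P(0,m)$ and $m\mapsto\ell(\bar I^m/\bar\m\bar I^m)$ are not a priori the same. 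A clean fix avoids the fibre cone entirely: since $\bar P_I,\bar P_{I'}$ have total degree $\le i$, fix any large $u_0=C$ and let $u_1\to\infty$; because $a+b\le i$ and $b=i$ force $a=0$, the coefficient of $u_1^{\,i}$ in $\bar P(C,u_1)$ is exactly the $(0,i)$-coefficient, and one-variable domination then gives $e_{(0,i)}(\bar\m\mid\bar I)\le e_{(0,i)}(\bar\m\mid\bar I')$. You should also record explicitly that the same $n-i$ general linear forms can be chosen superficial simultaneously for the pairs $(\m,J(h))$ and $(\m,N)$, which is automatic over~$\C$.

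For context, Huh's own argument in \cite{June12} is geometric: he interprets $e_i(\m\mid J(h))$ as the $i$th projective degree of the polar map $\mathbb{P}^n\dashrightarrow\mathbb{P}^n$ defined by the partials of $h$, and bounds these degrees by the mixed volumes through toric intersection theory on the Newton polytope. Your Hilbert-function route is more elementary and stays entirely within the commutative-algebra framework of the present paper, at the cost of needing the superficial-element machinery and the careful coefficient-extraction argument above.
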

We use our algorithms to verify the above result through examples.

\begin{example}
Let $h=z_0^3+z_1^3+z_2^3+z_0z_1z_2\in \mathbb{Q}[z_0,z_1,z_2]$.	Then the Jacobian ideal $J(h)=(h_{z_0},h_{z_1},h_{z_2})$ is $(3z_0^2+z_1z_2,3z_1^2+z_0z_2,3z_2^2+z_0z_1)$. We set $\m=(z_0, z_1, z_3)$. Using Macaulay2, we verify that $\hgt J(h)=3$. This ensures that $J(h)$ is an $\m$-primary ideal, since $J(h)$ is homogeneous. Consequently, $h$ has an isolated singularity at the origin. Notice that the vertices of $\Delta_h$ are $\{(0,0),(1,1),(2,0),(0,1),(1,0),(0,2)\}$. The following session in Macaulay2 supports Theorem \ref{June}. In fact, we get equality for all $i$. 
{\small\begin{multicols}{2}
		\begin{verbatim}
			i1 : R = QQ[z_0..z_2];
			i2 : h = z_0^3+z_1^3+z_2^3+z_0*z_1*z_2;
			i3 : J = ideal jacobian ideal h;
			i4 : codim J
			o4 : 3
			i5 : I = homIdealPolytope
			    {(0,0),(1,1),(2,0),(0,1),(1,0),(0,2)};
			i6 : W = ring I;
			i7 : n = ideal vars W;
			i8 : mMixedVolume {n,n}
			o8 : 1
			i9 : mMixedVolume {n,I}
			o9 : 2
			i10 : mMixedVolume {I,I}
			o10 : 4
			i11 : secMilnorNumbers(h)
			o11 : HashTable{0=>1, 1=>2, 2=>4, 3=>8}
		\end{verbatim}
\end{multicols}}

In \cite[Example 17]{June12}, J. Huh produced an example of a polynomial $h$ which gives a strict inequality for some $i$. Although, this $h$ does not have isolated singularity at the origin. The following Macaulay2 session verifies the example. 
{\small\begin{multicols}{2}
		\begin{verbatim}
			i1 : R = QQ[z_0..z_2];
			i2 : h = z_1*(z_0*z_1 - z_2^2);
			i3 : J = ideal jacobian ideal h;
			i4 : codim J
			o4 : 2
			i5 : m = ideal vars R;		
			i6 : mixedMultiplicity ((m,J),(2,0))
			o6 : 1
			i7 : mixedMultiplicity ((m,J),(1,1))
			o7 : 2
			i8 : mixedMultiplicity ((m,J),(0,2))
			o8 : 1
			i9 : I = homIdealPolytope 
			         {(2,0),(1,0),(0,2),(1,1)};
			i10 : W = ring I;
			i11 : n = ideal vars W;
			i12 : mMixedVolume {n,n}
			o12 : 1
			i13 : mMixedVolume {n,I}
			o13 : 2
			i14 : mMixedVolume {I,I}
			o14 : 2
		\end{verbatim}
\end{multicols}}
Note that for $i=2$, we are getting the inequality $1<2$ as it was claimed by Huh. One can also consider the homogeneous polynomial $h= z_0^2z_1+z_0^2z_2+z_1^2z_2+z_1z_2^2 \in \mathbb{Q}[z_0,z_1,z_2]$. Running a similar session as above, we get that $\mu^{(2)}(h) =2$ whereas $MV_2(\Delta_h,\Delta_h)=4.$
\end{example}

\subsection{Verifying example of Jo\"el Brian\c{c}on and Jean-Paul Speder}

We first recall Teissier's conjecture which claims that the invariance of Milnor number implies invariance of sectional Milnor numbers.

\noindent
{\bf Teissier’s Conjecture.} \cite{teissier1973} If $(X,x)$ and $(Y,y)$ have same topological type, then
\[\mu^*(X, x)=\mu^*(Y, y).\]
In \cite{Briancon-Speder}, Jo{\"e}l Brian\c{c}on and Jean-Paul Speder disproved the conjecture by giving a counter-example. They considered the family of hypersurfaces ${\bf X}_{t} \in \C^3$ defined by 
\[F_t(x,y,z)=z^5 + ty^6z + xy^7 + x^{15}=0.\] 
This family ${\bf X}_t$ provides a counter example to Teissier's conjecture. We verify the example by using Teissier's observation of identifying the sectional Milnor numbers with mixed multiplicities of ideals.

Consider the ideals $\m=(x,y,z)$ and $J(F_t)=(\partial F_t/\partial x, \partial F_t/\partial y, \partial F_t/\partial z)$ in $\mathbb{C}[x,y,z]$, where 
\[\frac{\partial F_t}{\partial x}= y^7+15x^{14},\quad \frac{\partial F_t}{\partial y}= 6ty^5z+7xy^6 \quad\text{ and }\quad \frac{\partial F_t}{\partial z}=5z^4+ty^6.\] 
We show that while $e(J(F_t))$ in independent of $t$, $e_2(\m \mid J(F_t))$ depends on $t$. Recall that 
\[e_3(\m \mid J(F_t))=e(J(F_t))=\ell\left(\frac{\C[x,y,z]}{J(F_t)}\right).\] 
Let $t=0.$ Since $J(F_0)$ is generated by a system of parameters,
\begin{align*}
e(J(F_0))=e(y^7+15x^{14},xy^6,z^4)
&=4e(y^7+15x^{14},xy^6,z)\\
&=4e(y^7+15x^{14},x,z)+4e(y^7+15x^{14},y^6,z)\\
&=28+24e(x^{14},y,z)=28+336=364.
\end{align*}
Here, we use a fact that in a Cohen-Macaulay ring, if $(a,b,c)$ and $(a,b,d)$ are ideals generated by system of parameters such that $c$ and $d$ are co-prime and $a,b,cd$ is a system of parameter, then $e(a,b,cd) = e(a,b,c)+e(a,b,d).$
Now let $t \neq 0$. Using the above observation and \cite[Theorem 14.11]{matsumuraCRT}, we get
\begin{align*}
e(J(F_t))&=e(y^7+15x^{14},y^5(6tz+7xy),5z^4+ty^6)\\
&=5e(y^7+15x^{14},y,5z^4+ty^6)+e(y^7+15x^{14},6tz+7xy,5z^4+ty^6)\\
&=5e(x^{14},y,z^4)+e(y^7+15x^{14},5x^4y^4\alpha^4+ty^6) \quad (\text{by putting $z=xy\alpha$ where $\alpha=-\frac{7}{6t}$})\\
&=280+e(y^7+15x^{14},y^4(5x^4\alpha^4+ty^2))\\
&=280+4e(y^7+15x^{14},y)+e(y^7+15x^{14},5(\alpha x)^4+ty^2)\\
&=280+56+e(y^7+15x^{14},\sqrt{5}\alpha^2x^2 + i\sqrt{t}y)+e(y^7+15x^{14},\sqrt{5}\alpha^2x^2 - i\sqrt{t}y)\\
&=336+14+14=364.
\end{align*}
Hence $e(J(F_t))=364$ and is independent of $t$. 
We now calculate $e_2(\m \mid J(F_t))$. 

In order to calculate $e_2(\m \mid J(F_t)),$ we find a joint reduction of $(\m, J(F_t),J(F_t)).$ Let $t=0.$ Consider the set of elements $\{ x,\partial F_0/\partial x,\partial F_0/\partial z \}.$ Using Macaulay2, we could check that 
\[ \m^{10}J^2 = (x)\m^9J^2 + (\partial F_0/\partial x,\partial F_0/\partial z)\m^{10}J \] 
in $\mathbb{Q}[x,y,z].$ This implies $\{ x,\partial F_0/\partial x,\partial F_0/\partial z \}$ is a joint reduction of $(\m, J(F_0),J(F_0))$ and hence 
\[ e_2(\m \mid J(F_0)) = e(x,\partial F_0/\partial x,\partial F_0/\partial z) = e(x,y^7+15x^{14},z^4) = e(x,y^7,z^4)=28.\] 
If $t \neq 0,$ we consider the set of elements $\{x,\partial F_t/\partial y,\partial F_t/\partial z\}.$ Using Macaulay2, we could check that 
\[ \m^{11}J^2=(x) \m^{10}J^2+(\partial F_t/\partial y, \partial F_t/\partial z)\m^{11}J \]
in the ring $\mathbb{Q}(t)[x,y,z].$ So $\{x,\partial F_t/\partial y,\partial F_t/\partial z\}$ is a joint reduction of $(\m, J(F_t), J(F_t))$ and hence
\begin{align*}
e_2(\m \mid J(F_t)) = e(x,\partial F_t/\partial y,\partial F_t/\partial z) 
&= e(x, 6ty^5z+7xy^6, 5z^4+ty^6) \\
&= e(x, y^5z, 5z^4+ty^6) \\
&= e(x,y^5,z^4) + e(x,z,y^6) = 20+6 = 26 .
\end{align*}

This proves that the mixed multiplicities are dependent on $t$, verifying the example given by Brian\c{c}on and Speder. The following displays the working in Macaulay2.
{\small
	\setlength{\columnsep}{-20pt}
	\begin{multicols}{2}
		\begin{verbatim}
		i1 : k = frac(QQ[t]);
		i2 : R = k[x,y,z];
		i3 : f = z^5 + t*y^6*z + x*y^7 + x^15;
		i4 : secMilnorNumbers (f)
		o4 = HashTable{0=> 1, 1=> 4, 2=> 26, 3=> 364}
		i5 : g = z^5 + x*y^7 + x^15;
		i6 : secMilnorNumbers (g)
		o6 = HashTable{0=> 1, 1=> 4, 2=> 28, 3=> 364}
		\end{verbatim}
	\end{multicols}
}

\bibliographystyle{plain}

\end{document}